\title{Continuous Distributions Arising from the Three Gap Theorem}
\author{Gerem\'ias Polanco}
\address{School of Natural Science, Hampshire College, 893 West Street, Amherst, MA 01002, USA}
\email{gpeNS@hampshire.edu}
\author{Daniel Schultz}
\address{Penn State University Mathematics Dept.
University Park, State College, PA 16802, USA}
\email{dps23@math.psu.edu}
\author{Alexandru Zaharescu}
\address{Simion Stoilow Institute of Mathematics of the Romanian Academy, Research Unit 5, P.O.Box 1-764, 
RO-014700 Bucharest, Romania, and
Department of Mathematics, University of Illinois at Urbana-Champaign, 
1409 W. Green Street, Urbana, IL, 61801, USA
E-mail address: zaharesc@illinois.edu}
\email{zaharesc@illinois.edu}
\numberwithin{equation}{section}
\newtheorem{theorem}{Theorem}[section]
\newtheorem{lemma}[theorem]{Lemma}
\newtheorem{remark}[theorem]{Remark}
\begin{document}
\maketitle
\vspace{.5cm}
\begin{abstract}
The well known Three Gap Theorem states that there are at most three gap sizes in the sequence of fractional parts $\{\alpha n\}_{n<N}$. It is known that if one averages over $\alpha$, the distribution becomes continuous. We present an alternative approach, which establishes this averaged result and also provides good bounds for the error terms.
\end{abstract}
\ \\
2010 Mathematics Subject Classification: 11B57, 11B99, 11Kxx.\\
Keywords and phrases:
Three Gap Theorem, Farey Fractions, Continuous Distribution, Uniform Distribution.
\section{Introduction}
Let $\alpha$ be a real number and consider the fractional parts $\{n
\alpha\}_{0\le n< Q}$ arranged in increasing order and placed on the
interval $[0,1]$ with $0$ and $1$ identified so that $Q$ gaps appear.
If we denote this sequence by $S_Q(\alpha)$ and consider the gaps between
consecutive elements, then it is well known that there are at most three gap sizes in
$S_Q(\alpha)$. For more on this topic the reader is referred to 
van Ravenstein \cite{dgapRavenstein}, S\'{o}s \cite{dgapSos} and \'Swierczkowski
\cite{dgapSwierczkowski} (see also \cite{dgapCobeli}, \cite{dgapGVZ} for some 
higher dimensional aspects of this phenomenon).
The sequence $\{n^d \alpha\}$, for $d\ge1$, has been
extensively studied. If $d>1$, the main reference is the work of Rudnick and
Sarnak \cite{dgapSarnak1}, where it is proved that for almost
all $\alpha$ the pair correlation of members of
the sequence is Poissonian (see also \cite{dgapZaharescu1}). 
Various other works have been done studying fractional parts of
polynomials. A non-exhaustive list of references includes Arhipov,
Karacuba, and \v{C}ubarikov\cite{dgapArhipov}, Baker and Harman
\cite{dgapBaker}, de Velasco \cite{dgapdevelasco}, Karacuba
\cite{dgapKaracuba}, Kovalevskaja \cite{dgapKovalevskaja},
Moshchevitin \cite{dgapmoshchevitin}, Schmidt \cite{dgapSchmidt}, and
Wooley \cite{dgapWooley}. The distribution of gaps and other aspects
of fractional parts has also been investigated for specific types of
numbers by Misevi\v{c}ius and Vakrinien\v{e} \cite{dgapMisevicius}, 
Zaimi \cite{dgapZaimi}, Dubickas \cite{dgapDubickas1}, \cite{dgapDubickas2},
Pillishshammer \cite{dgapPillichshammer}, and others.
Returning to the sequence $\{n \alpha\}$, it is known that if one averages over $\alpha$ 
the distribution becomes continuous. For the relevant literature the reader is referred to 
Bleher \cite{dgapBleher}, Mazel and Sinai \cite{dgapMazel}, and Greenman \cite{dgapGreenman}.
In particular, the density of the limiting distribution is computed in \cite{dgapGreenman}.

In the present paper we offer an alternative approach to proving such results, which
offers very satisfactory control over the error terms.
To proceed, we consider short averages of the nearest neighbor distribution of elements in
$S_Q(\alpha)$. We denote the $n^{\text{th}}$ element of
$S_Q(\alpha)$ by $\{\sigma_{n} \alpha\}$ and consider
the function:
\begin{equation}
\label{gk1def}
g_1^{\beta,\eta}(\lambda;Q)=\frac{1}{\eta}\int\limits_{\beta}^{\beta+\eta} {\frac{\#\{0 \le n <
Q: \{ \sigma_{n+1} \alpha \}-\{ \sigma_{n} \alpha \} \ge
\frac{\lambda}{Q}\} }{Q}} d\alpha\text{.}
\end{equation}
When $Q$ is
taken to be $1000$, $\alpha=1/3$, and $\eta=1/10$, the resulting graph is shown
in Figure 1.
\begin{figure}
\begin{center}
\includegraphics[width=5in]{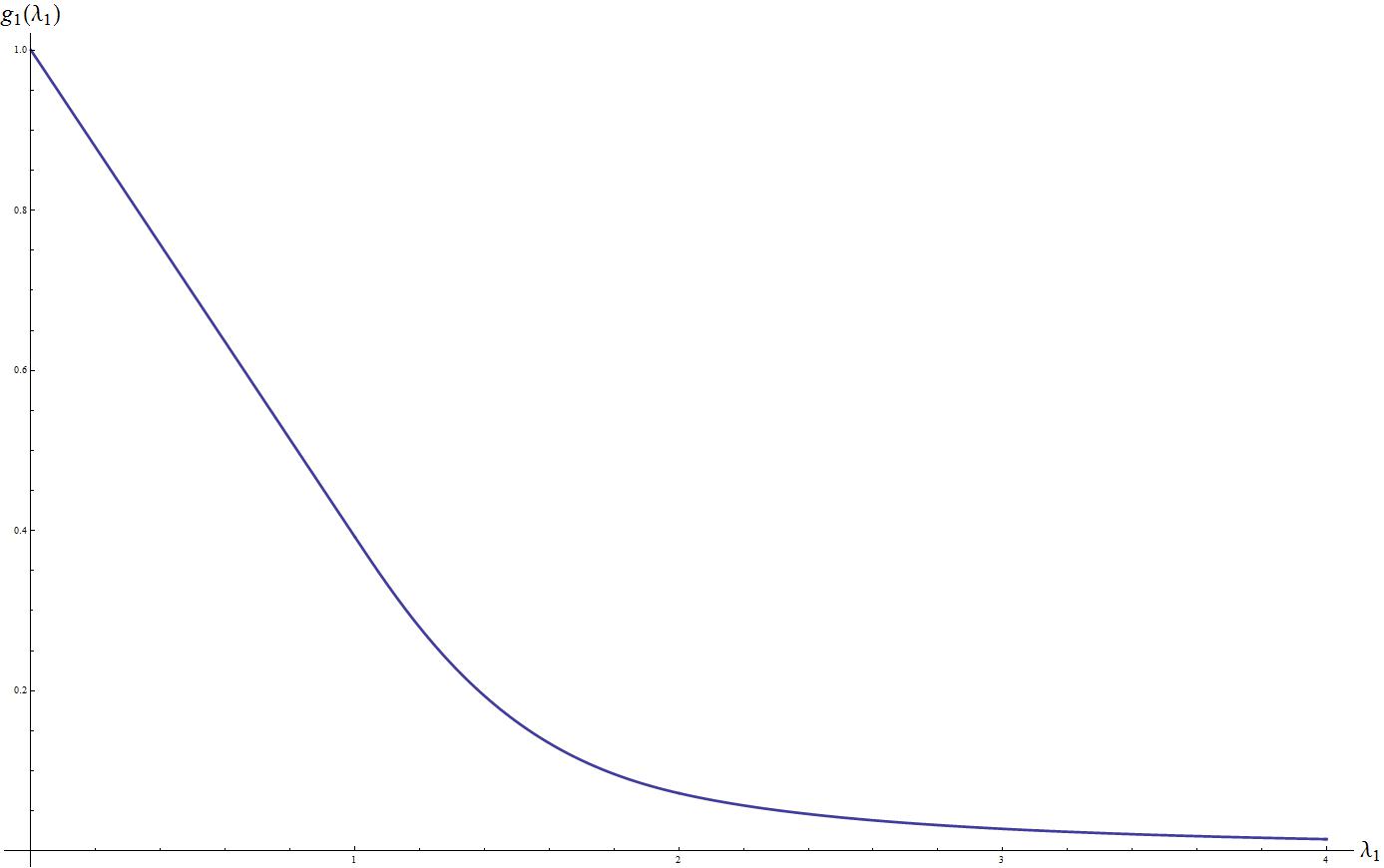}
\end{center}
\caption{$g_1^{\frac{1}{3},\frac{1}{10}}(\lambda;1000)$}
\label{figure1}
\end{figure}
Moreover, the graphs for different values of $\beta$ and $\eta$ appear to be identical.
Heuristically speaking, it is reasonable to
think that the subtleties lie on the nature of the sequence and how the gaps are related to its elements. 
It is natural to try and connect  the
elements of the sequence to the three possible gaps.
We establish such a connection using classical properties of
Farey fractions, that can be found in Hall \cite{dgapHall}, Hardy and
Wright \cite{dgapHardy}, and LeVeque \cite{dgapLeVeque}. We also use
other further developed properties connecting Farey fractions with
Kloosterman sums which have been applied to some useful asymptotics in
\cite{dgapZaharescu2}, Hall \cite{dgapHall2}, and in Hall and Tenenbaum
\cite{dgapHall3}. With these tools in hand we find an explicit formula
for the $g_1$ distribution function and show that this function is still approached as long as the size of the interval goes to zero no faster than $Q^{-1/2+\epsilon}$. More precisely we prove the following:\\

\begin{theorem}
\label{dgapthm1}
As $Q \to \infty$, the nearest neighbor distribution in \eqref{gk1def} is independent of $\beta$ and $\eta$, and we have
\begin{align*}
g_1(\lambda)&:=\lim_{Q \to \infty}{g_1^{\beta,\eta}(\lambda;Q)}\\
&=\frac{6}{\pi^2}\left\{
\begin{array}{ll}
\frac{\pi ^2}{6}-\lambda \text{,} & 0 < \lambda < 1 \\\\
\begin{array} {l}
\log ^2(2)-\frac{2 \pi ^2}{3}-1+\left(\frac{\lambda
}{2}-\frac{2}{\lambda }\right) \log \left(\frac{2-\lambda }{\lambda
-1}\right)+\\
\frac{3 \lambda}{2} \log \left(\frac{\lambda }{\lambda
-1}\right)
-\log \left(\frac{4}{\lambda }\right) \log (\lambda )+4
\operatorname{Li}_2\left(\frac{1}{\lambda }\right)+\\
2
\operatorname{Li}_2\left(\frac{\lambda }{2}\right)
\end{array}\text{,} & 1<\lambda <2 \\\\
-1+\left(\frac{\lambda}{2} -\frac{2}{\lambda }\right) \log
\left(\frac{\lambda -2}{\lambda -1}\right)+\frac{3 \lambda}{2} \log
\left(\frac{\lambda }{\lambda -1}\right)+\\
4
\operatorname{Li}_2\left(\frac{1}{\lambda }\right)-2
\operatorname{Li}_2\left(\frac{2}{\lambda }\right)\text{,} & 2<\lambda\text{,}
\end{array}
\right. 
\end{align*}
where the Dilogarithm is defined for $|z| \le 1$ by
\begin{equation*}
\operatorname{Li}_2(z)=\sum_{n=1}^{\infty}{\frac{z^n}{n^2}}\text{.}
\end{equation*}
Moreover, 
\begin{equation*}
\left| g_1^{\beta,\eta}(\lambda;Q)-g_1(\lambda)\right| \ll_{\epsilon} (1+\lambda) \frac{Q^{\epsilon-1/2}}{\eta} \text{.}
\end{equation*}
\end{theorem}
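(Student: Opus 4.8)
The plan is to replace the average over $\alpha$ by a sum over consecutive pairs of Farey fractions, to evaluate the resulting one-dimensional integrals in closed form, and then to pass from the Farey sum to an explicit double integral, the error incurred at that last passage being controlled by Weil's bound for Kloosterman sums. The starting point is the Farey-fraction form of the Three Gap Theorem: if $\frac{a}{q}<\frac{a'}{q'}$ are consecutive in the Farey sequence $\mathcal F_{Q-1}$ and $\frac{a}{q}<\alpha<\frac{a'}{q'}$, then $a'q-aq'=1$, $q+q'\ge Q$, and the gaps of $S_Q(\alpha)$ are $\ell:=q\alpha-a$, $\ell':=a'-q'\alpha$ and $\ell+\ell'$, occurring with multiplicities $Q-q$, $Q-q'$ and $q+q'-Q$ (note $q\ell'+q'\ell=1$ and that the multiplicities sum to $Q$). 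Since the $\mathcal F_{Q-1}$-neighbours of $\alpha$ are constant on each interval $J=(\frac aq,\frac{a'}{q'})$, the numerator in \eqref{gk1def} equals, for $\alpha\in J$, the piecewise constant quantity $(Q-q)\mathbf 1[\ell\ge\tfrac\lambda Q]+(Q-q')\mathbf 1[\ell'\ge\tfrac\lambda Q]+(q+q'-Q)\mathbf 1[\ell+\ell'\ge\tfrac\lambda Q]$, so that $\eta Q\,g_1^{\beta,\eta}(\lambda;Q)=\sum_J\int_{J\cap[\beta,\beta+\eta]}(\cdots)\,d\alpha$; the at most two intervals straddling the endpoints contribute $O(1)$ and may be discarded.

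On each interval $J$ the three gap functions are affine in $\alpha$ and $|J|=\tfrac1{qq'}$, so every integral is elementary; for instance $\int_J(Q-q)\mathbf 1[\ell\ge\tfrac\lambda Q]\,d\alpha=(Q-q)\big(\tfrac1{qq'}-\tfrac{\lambda}{qQ}\big)^{+}$, and the other two terms are handled the same way. The outcome is $\eta Q\,g_1^{\beta,\eta}(\lambda;Q)=\sum_J F(q,q')+O(1)$, where $F(q,q')$ depends only on the pair of denominators, is supported on coprime pairs with $q+q'\ge Q$, $1\le q,q'\le Q-1$, and satisfies $F(q,q')\ll Q/(qq')$. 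Since a pair of consecutive Farey fractions with prescribed denominators $(q,q')$ is pinned down by the congruence $a\equiv-\overline{q'}\pmod q$, there is exactly one admissible interval $J\subset[0,1)$ per such pair, located at $\tfrac aq$; hence $\sum_{J\subset[\beta,\beta+\eta)}F(q,q')=\sum_{(q,q')}F(q,q')\,\mathbf 1\!\big[\{-\overline{q'}/q-\beta\}<\eta\big]$, which we split into a main term $\eta\sum_{(q,q')}F(q,q')$ and an error $\sum_{(q,q')}F(q,q')\big(\mathbf 1[\cdots]-\eta\big)$.

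Bounding this error is the heart of the matter, and the step I expect to be the main obstacle. Expanding the sharp cutoff into additive characters and completing the sum over $q'$ in the style of P\'olya and Vinogradov reduces it to incomplete exponential sums $\sum_{q'\,(\mathrm{mod}\,q)}e(-h\overline{q'}/q)$, that is, to Kloosterman sums $S(-h,t;q)$; Weil's bound $S(-h,t;q)\ll q^{1/2+\epsilon}$, after summing the resulting series in $h$ (whose effective length produces the factor $\ll 1+\lambda$ coming from the $\lambda$-dependent clamping thresholds in $F$) and then summing over $q$, yields a total contribution $\ll(1+\lambda)Q^{1/2+\epsilon}$. Dividing by $\eta Q$ gives the stated bound $\ll(1+\lambda)Q^{\epsilon-1/2}/\eta$. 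Everything hinges on obtaining square-root cancellation uniformly across the dyadic ranges of $q$ and $q'$ and uniformly in $\lambda$, and it is precisely here that the known links between Farey fractions and Kloosterman sums are needed.

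For the main term one writes $\mathbf 1[\gcd(q,q')=1]=\sum_{d\mid q,\,d\mid q'}\mu(d)$ and applies Euler--Maclaurin first in $q'$ and then in $q$; although $F$ grows like $Q/q$ as $q\to1$, its total variation in $q'$ is only $\ll 1/q$, so the accumulated error stays $\ll(1+\lambda)Q^{\epsilon}$, while $\sum_d\mu(d)/d^2=6/\pi^2$. Setting $q=Qu$, $q'=Qv$ turns $F$ into $\tfrac1Q\,\Phi(u,v;\lambda)$ with $\Phi$ an explicit piecewise rational function, and one arrives at $g_1(\lambda)=\tfrac6{\pi^2}\iint_T\Phi(u,v;\lambda)\,du\,dv$, where $T=\{0<u,v\le1,\ u+v>1\}$; this expression is visibly independent of $\beta$ and $\eta$. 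Finally $\iint_T\Phi$ is evaluated by subdividing $T$ according to the conditions $\lambda<1/v$, $\lambda<1/\max(u,v)$, $\lambda<1/\min(u,v)$, whose geometry against $u+v>1$ forces the breakpoints $\lambda=1$ and $\lambda=2$; in each regime the integral reduces to finitely many integrals of rational functions against a logarithm, evaluating to elementary terms and dilogarithms. As a check, for $0<\lambda<1$ one gets at once $\Phi=\tfrac1{uv}-\lambda\big(\tfrac1u+\tfrac1v-2\big)$ and $\iint_T\Phi=\tfrac{\pi^2}6-\lambda$, which reproduces the first branch of the formula; the remaining two branches follow from the same, longer, computation.
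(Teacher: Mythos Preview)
Your proposal is correct and follows essentially the same route as the paper: Farey-arc decomposition of $[\beta,\beta+\eta]$ via the Three Gap Theorem, elementary evaluation of each arc integral leaving a sum over consecutive denominators only, conversion of that sum to $\tfrac{6}{\pi^2}\iint_T\Phi$ with Kloosterman-sum/Weil-bound control of the error, and piecewise dilogarithmic evaluation of the double integral with breakpoints at $\lambda=1,2$. The only cosmetic difference is that the paper quotes the Farey-sum-to-integral step as a ready-made lemma (Lemma~8 of Boca--Cobeli--Zaharescu) and then, to apply it, truncates the triangle to $T_\delta=\{x+y\ge 1+\delta,\ |x-y|>\delta\}$ in order to tame the blow-up of $\Phi/xy$ near the corners before optimizing over~$\delta$, whereas you sketch the M\"obius/Euler--Maclaurin/Kloosterman argument inline and absorb that singularity through the stated total-variation bound on $F$; both lead to the same $(1+\lambda)Q^{\epsilon-1/2}/\eta$ error.
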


More generally, we consider the joint distribution
$g_k(\lambda_1,\lambda_2,\dots,\lambda_k)$, which describes the
average distribution of $k$-tuples of consecutive gaps over the
interval $[\beta,\beta+\eta]$, defined as
\begin{equation}
\label{dgap2}
g_k(\lambda_1,\lambda_2,\dots,\lambda_k)=\lim_{Q\to\infty}{g_k^{\beta,\eta}(\lambda_1,\dots,\lambda_k;Q)}\text{,}
\end{equation}
where
\begin{equation*}
g_k^{\beta,\eta}(\lambda_1,\dots,\lambda_k;Q)=\frac{1}{\eta}{\int\limits_{\beta}^{\beta+\eta}
{\frac{\#\{ \gamma_1 \cdots \gamma_k \in G_{Q,k}(\alpha) : \forall_{i
\le k} \ \gamma_i \ge \frac{\lambda_i}{Q} \} }{Q}} d\alpha}\text{,}
\end{equation*}
 and $G_{Q,k}(\alpha)$ denotes the list of $k$-sequences of consecutive
gaps in $S_Q(\alpha)$ (see the beginning of next section for an illustration of $G_{Q,K}(\alpha)$ for $K=1,2,3$).\\

When $Q$ is taken to be $1000$ the resulting graph is shown in Figure 2.
\begin{figure}
\begin{center}
\includegraphics[width=5in]{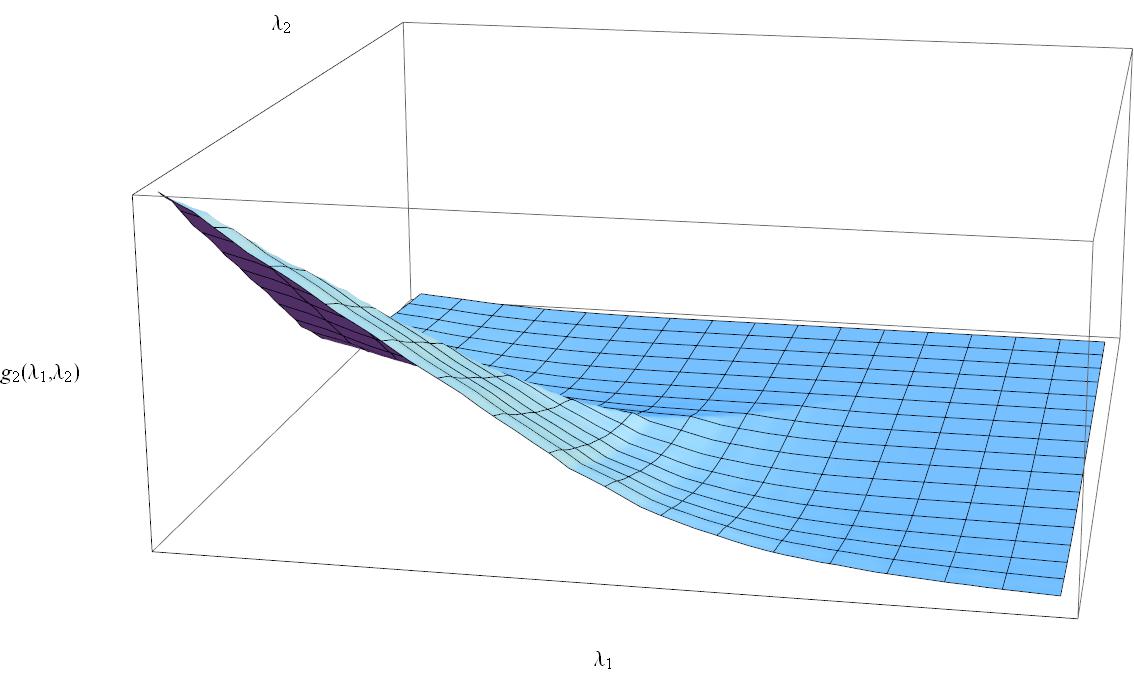}
\end{center}
\caption{$g_2(\lambda_1,\lambda_2)$}
\label{figure2}
\end{figure}
It can be proved that the functions $g_k(\lambda_1,\dots,\lambda_k)$
exist and are independent of the interval $[\beta,\beta+\eta]$ on which the average
is taken. Towards the end of this paper we concentrate in this
multidimensional case and find an explicit formula for
$g_2(\lambda_1,\lambda_2)$. We will deduce that the joint distribution
$g_2(\lambda_1,\lambda_2)$ is piecewise linear for
$\lambda_1+\lambda_2<1$, that is,
\begin{equation}
\label{g2example}
g_2(\lambda_1,\lambda_2)=1-\frac{6}{\pi^2}
\operatorname{Max}(\lambda_1,\lambda_2)-\frac{3}{\pi^2}
\operatorname{Min}(\lambda_1,\lambda_2) \text{,} \quad \text{for}
\quad \lambda_1+\lambda_2<1 \text{.}
\end{equation}
From a probabilistic point of view, our questions of interest deal with the set $S_Q({\alpha})$, whose average gap size is $Q^{-1}$, for large $Q$. In the one
dimensional case and a given $\lambda_1$, what is the probability that a randomly selected gap will be greater than $\lambda_1$ times the average gap? For the the $k$
dimensional case (say $k=2$) and a given $\lambda_1$ and $\lambda_2$, what is the probability that a gap and its
neighbor to the right are both greater than the average times
$\lambda_1$ and $\lambda_2$ respectively? Several papers have pointed
out this interpretation of the problems of our interest (see for example
\cite{dgapRudnick1}, \cite{dgapRudnick2}, \cite{dgapZaharescu3},
and the references there in). We
answer the two questions posed above by giving the cumulative
distribution function for the $1$-dimensional and 2-dimensional case.
One would expect that in the $k$-dimensional case the events are not
independent. This is made evident by the fact that
$g(\lambda_1,\lambda_2)$ is not the product of
$g(\lambda_1)$ and $g(\lambda_2)$ (by Theorem \ref{dgapthm1} and \eqref{g2example}).
The proof of these theorems have three key stages which will determine
the structure of the present paper. The next three sections of this
paper are devoted to the one dimensional case $g(\lambda_1)$. We first
establish a connection of the elements of the sequence of
fractional parts with Farey fractions in Section 2. The third section
of the paper is devoted to some lemmas that allow us to write a
sum over Farey fractions in a given region as an integral that is
easier to handle. The main ingredients of these lemmas are Kloosterman
sums. In the third section we write $g(\lambda_1)$ as a suitable sum
for which we can apply the lemmas from the third section. After some
other miscellaneous work we complete the proof of Theorem
\ref{dgapthm1}. The last two sections of this paper are devoted to the
joint distribution from $k$-dimensional spacing between consecutive
elements of the sequence of fractional parts, with special attention
to $k=2$.

\section{Key Connections to Farey Fractions}
In order to understand the limit given in definition \eqref{gk1def} we
must first understand the cardinality of the set in the integrand.
Specifically, given that we have three possible gaps sizes $A$, $B$ and $C$,
we need to have more information on the number of gaps of each size for any given $Q$, as well as a way to handle the contribution of each gap. This is accomplished in Lemma \ref{lemma1}, where we identify the two smaller gaps $A$ and $B$ with the closest Farey fractions on either side of $\alpha$. This will allow us to rewrite the integral from Theorem \ref{dgapthm1} in a form more useful for computations.\\

Let us illustrate the three gaps theorem in the case when $\alpha=\sqrt{2}$ and $Q=10$. The points $S_{10}(\sqrt{2})$ along
with the three gaps labeled $A$, $B$, and $C$ are shown in Figure 3.
\begin{figure}
\begin{center}
\includegraphics[width=5in]{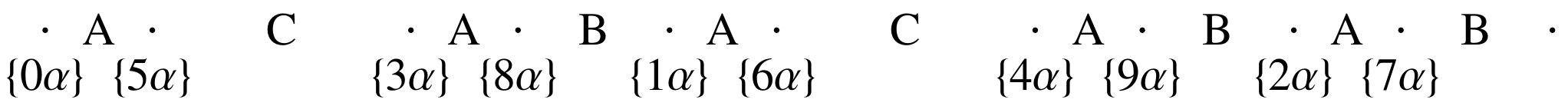}
\end{center}
\caption{$S_{10}(\sqrt{2})$}
\label{figure10}
\end{figure}
Let $G_{Q,k}(\alpha)$ denotes the list of $k$-sequences of consecutive
gaps in $S_Q(\alpha)$. When $k>1$ we allow the sequence to wrap around
$0$ so that exactly $Q$ tuples of gaps are considered. For example,
\begin{align*}
G_{10,1}(\sqrt{2})&=\{A,C,A,B,A,C,A,B,A,B\}\text{,}\\
G_{10,2}(\sqrt{2})&=\{AC,CA,AB,BA,AC,CA,AB,BA,AB,BA\}\text{,}\\
G_{10,3}(\sqrt{2})&=\{ACA,CAB,ABA,BAC,ACA,CAB,ABA,BAB,ABA,BAC\}\text{.}
\end{align*}
Also, in this case the permutation $\sigma$ satisfies
\begin{equation*}
\{\sigma_n\}_{n=0}^{9}=\{0,5,3,8,1,6,4,9,2,7\}\text{.}
\end{equation*}
\begin{lemma}
\label{lemma1}
For an irrational $\alpha$, the lengths of three gaps ($A$,$B$, and
$C$) generated by the sequence $\{n \alpha\}_{0\le n< Q}$ may be
computed as follows: Let us arrange the Farey fractions
$\{0\le\frac{a}{q}\le1|(a,q)=1 , \ q<Q \}$ in order on the interval
$[0,1]$ and choose consecutive fractions $\frac{a_1}{q_1}$ and
$\frac{a_2}{q_2}$ so that
\begin{equation*}
\frac{a_1}{q_1}<\alpha<\frac{a_2}{q_2} \text{.}
\end{equation*}
Then, the three gaps that can appear have lengths
\begin{align*}
A&=q_1 \alpha - a_1=\{q_1 \alpha\}\text{,}\\
B&=a_2 - q_2 \alpha = 1 - \{q_2 \alpha\} \text{,}\\
C&=A+B\text{,}\\
\end{align*}
and the function $\sigma$, which is a permutation of the set
$\{0,1,\dots,Q-1\}$ so that the sequence
\begin{equation*}
\{\sigma_0 \alpha\}, \quad \{\sigma_1 \alpha\}, \quad \dots, \quad
\{\sigma_{Q-1} \alpha\}
\end{equation*}
is in increasing order, satisfies:
\begin{align*}
\sigma_0&=0\text{,}\\
\sigma_{i+1}-\sigma_i&= \left\{\begin{array}{ll}
q_1\text{,} & \text{if } \sigma_i \in [0,Q-q_1) \quad\quad \text{ (A Gap)}\\
q_1-q_2 \text{,} & \text{if } \sigma_i \in [Q-q_1,q_2) \quad\quad \text{(C Gap)}\\
-q_2 \text{,} & \text{if } \sigma_i \in [q_2,Q) \quad\quad\quad\quad \text{(B Gap)}
\end{array}
\right. \text{.}
\end{align*}
\end{lemma}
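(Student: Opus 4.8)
The plan is to verify the claimed gap lengths and the recursion for $\sigma$ simultaneously by an inductive argument that tracks how the points $\{n\alpha\}$ land relative to the two bounding Farey fractions. First I would recall the classical fact that consecutive Farey fractions $\frac{a_1}{q_1}<\frac{a_2}{q_2}$ of order $<Q$ satisfy $a_2 q_1 - a_1 q_2 = 1$, and that $q_1 + q_2 \ge Q$ while $q_1, q_2 < Q$; these are exactly the inputs from \cite{dgapHardy} or \cite{dgapLeVeque}. From $\frac{a_1}{q_1}<\alpha<\frac{a_2}{q_2}$ one gets immediately that $A := q_1\alpha - a_1 \in (0,1)$ and $B := a_2 - q_2\alpha \in (0,1)$, so $A = \{q_1\alpha\}$ and $B = 1 - \{q_2\alpha\}$ as claimed; moreover $A$ and $B$ are the minimal positive values of $\{q\alpha\}$ and $1-\{q\alpha\}$ respectively over $0<q<Q$, since any smaller value would produce a Farey fraction of order $<Q$ strictly between $\frac{a_1}{q_1}$ and $\alpha$, or between $\alpha$ and $\frac{a_2}{q_2}$, contradicting consecutiveness.

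Next I would set up the combinatorial heart of the argument: sort the $Q$ points $\{0\cdot\alpha\},\{1\cdot\alpha\},\dots,\{(Q-1)\alpha\}$ and let $\sigma$ be the sorting permutation with $\sigma_0 = 0$. The key observation is that, for a point $\{m\alpha\}$ with $0\le m<Q$, its right neighbor in the sorted order is $\{(m+q_1)\alpha\}$ whenever $m + q_1 < Q$, because adding $q_1$ shifts the point right by exactly $A$ modulo $1$ and no point lies strictly in between (again by minimality of $A$). Similarly, subtracting $q_2$ shifts a point left by $B$ with nothing in between. So the right neighbor of $\{m\alpha\}$ is: $\{(m+q_1)\alpha\}$ if $m+q_1 < Q$ (an $A$-gap); $\{(m-q_2)\alpha\}$ if $m \ge q_2$ (a $B$-gap, shifting right by $1-B$... wait, one shows the point $\{(m - q_2)\alpha\}$ is the one immediately to the right when $m \ge q_2$, giving gap $B$); and when $Q - q_1 \le m < q_2$ neither neighbor candidate is available among the indices, forcing the step $q_1 - q_2$ and a gap of size $C = A + B$. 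These three index ranges $[0,Q-q_1)$, $[Q-q_1,q_2)$, $[q_2,Q)$ partition $\{0,\dots,Q-1\}$ precisely because $q_1 + q_2 \ge Q$ and $q_1, q_2 < Q$ (so $Q - q_1 \le q_2$ and both endpoints lie in range), which is why exactly these three gaps occur.

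Finally I would assemble these local statements into the global recursion. Starting from $\sigma_0 = 0$ and repeatedly applying the right-neighbor rule above yields $\sigma_{i+1} - \sigma_i \in \{q_1,\ q_1 - q_2,\ -q_2\}$ according to which interval $\sigma_i$ falls in, and one checks that this walk visits all $Q$ residues exactly once before returning to $0$ — equivalently, that the three translation maps are consistent with a single Hamiltonian cycle on $\{0,\dots,Q-1\}$, which follows from $\gcd$ considerations and the counting of each interval's size. I expect the main obstacle to be the bookkeeping that shows the three index-intervals are exactly the ones stated and that the neighbor in sorted order is always one of the two translates $m \pm q_i$ (never some unrelated index): this is precisely where the minimality characterization of $A$ and $B$ via the Farey condition $a_2 q_1 - a_1 q_2 = 1$ must be invoked carefully, handling the wrap-around at $0$ and $1$ without off-by-one errors. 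The verification against the worked example $\alpha = \sqrt{2}$, $Q = 10$ (with bounding fractions $\tfrac{4}{3}$ scaled, i.e. $q_1 = 5$, $q_2 = 7$, giving intervals $[0,5)$, $[5,7)$, $[7,10)$) serves as a useful consistency check on the index arithmetic.
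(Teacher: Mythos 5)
Your plan is correct and would yield the lemma, but it follows a genuinely different route from the paper's. You argue in the standard three-distance style: first establish that $A=\{q_1\alpha\}$ and $B=1-\{q_2\alpha\}$ are the minimal values of $\{q\alpha\}$ and $1-\{q\alpha\}$ over $0<q<Q$, then determine the right neighbor of each point $\{m\alpha\}$ directly (it is $\{(m+q_1)\alpha\}$, $\{(m+q_1-q_2)\alpha\}$ or $\{(m-q_2)\alpha\}$ according to which of the three index ranges contains $m$), so the gap lengths and the recursion for $\sigma$ come out of one local successor argument. The paper instead first proves the three-gap structure by a shifting argument: decrementing both indices of a gap preserves its length and keeps it a gap, except at most once, when it splits into translates of the gaps at $0$ and at $1$; hence every gap has length $A$, $B$ or $A+B$ before anything is known about $q_1,q_2$. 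Only afterwards does it identify the minimizers with $n=q_1$ and $n=q_2$ by the same Farey-consecutiveness argument you invoke, and it then reads off the $\sigma$ recursion by shifting the $A$, $B$, $C$ gaps forward. Your route gets the recursion more directly; the paper's shifting argument separates the combinatorial classification of gaps from the single arithmetic input. Two remarks on where your sketch defers the real work: your one-line justification of minimality hides exactly the computation the paper carries out --- if $0<q\alpha-a<q_1\alpha-a_1$ with $q<q_1$, the Farey fraction lying strictly between $\frac{a_1}{q_1}$ and $\alpha$ is not $\frac{a}{q}$ itself but the difference fraction $\frac{a_1-a}{q_1-q}$, so the two-case analysis is unavoidable; and the ``nothing strictly in between'' claims, especially for the $C$ range $[Q-q_1,q_2)$ where the successor must be shown to sit at distance exactly $A+B$, require the wrap-around case analysis you flag as the main obstacle. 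Finally, the closing ``Hamiltonian cycle / gcd'' verification is unnecessary: $\sigma$ is by definition the sorting permutation, so once the successor of each point is identified the stated recursion follows with nothing further to check.
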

\begin{remark}
\label{numbers}
This Lemma gives a simple formula for the number of gaps of each size. The number of $A$ gaps of each size is number of integers in the intervals on the right hand side of the recurrence relation for $\sigma_i$. Thus, the numbers of $A$ gaps, $B$ gaps and $C$ gaps are, respectively,
\begin{equation*}
Q-q_1, \quad Q-q_2, \quad q_1+q_2-Q \text{.}
\end{equation*}
This is in agreement with the fact that the total number of gaps is $Q$ and the total length of these gaps are $1$ since
\begin{gather*}
(Q-q_1) + (Q-q_2) + (q_1+q_2-Q) = Q\text{,}\\
(Q-q_1)A + (Q-q_2)B + (q_1+q_2-Q)C = 1\text{.}
\end{gather*}
This last equality is equivalent to the determinant property of Farey fractions:
\begin{equation*}
a_2 q_1 - a_1 q_2 =1\text{.}
\end{equation*}
Indeed, this property may be viewed as a corollary of Lemma \ref{lemma1}
\end{remark}
\begin{remark}
In \cite{dgapO'Bryant} O'Bryant developed a detailed account of
permutations that order fractional parts. Here we have used an
independent and simple idea to describe this permutations $\sigma_i$.
\end{remark}
\begin{proof}
Let us define a shifting operation that will associate each gap in the
sequence with the two gaps that have $0$ (or $1$) as an end point or
with the sum of these two gaps. Let the $A$ gap be the first gap, that
is, the one with $0$ as a left end point, and let the $B$ gap be the
last gap, that is, the gap with $1$ as a right end point. Suppose that
when the points $\{n \alpha\}$ for $0\le n < Q$ are arranged in
order, $\{n_1 x \}$ and $\{n_2 x \}$ appear consecutively so that the
interval $[\{n_1 x \},\{n_2 x \}]$ is a gap. If either $n_1$ or $n_2$
are $0$ then the gap is already the $A$ gap or the $B$ gap. If neither
$n_1$ nor $n_2$ is $0$, the gap $[\{n_1 x \},\{n_2 x \}]$ has the same
length as the interval $[\{(n_1-1) \alpha \},\{(n_2-1) \alpha \}]$ (if
$\{(n_1-1) \alpha \}>\{(n_2-1) \alpha \}$, we consider this interval
as wrapping around through $0$). This interval is also a gap except in
exactly one case: the case when the point $\{Q \alpha \}$ lies in the
gap $[\{n_1 x \},\{n_2 x \}]$. In this case the gap $[\{n_1 x \},\{n_2
x \}]$ is associated with the two smaller gaps $[\{(n_1-1) \alpha
\},\{(Q-1) \alpha\}]$ and $[\{(Q-1) \alpha\},\{(n_2-1)\alpha\}]$. This
proves that when an arbitrary starting gap is chosen, the shifting
process will divide the starting gap into two smaller gaps at most
once and that these smaller gaps will have the same size as the $A$
gap or the $B$ gap since this is where the shifting process
terminates.
We now know that the size of the $A$ gap is
\begin{equation*}
A=\underset{0< n<Q}{\operatorname{min}} \{n \alpha\}\text{,}
\end{equation*}
and the size of the $B$ gap is
\begin{equation*}
B=1-\underset{0< n<Q}{\operatorname{max}} \{n \alpha\}\text{.}
\end{equation*}
We need to show that the minimum and maximum are attained at $n=q_1$
and $n=q_2$ respectively, where $q_1$ and $q_2$ are as in the
statement of the lemma. Suppose there is another $q<Q$ with $\{q
\alpha\}<\{q_1 \alpha\}$ and write $\{q \alpha\}=q \alpha-a$. Then
\begin{equation}
\label{lemma1equ1}
0< q \alpha - a < q_1 \alpha - a_1\text{.}
\end{equation}
Since $\frac{a_1}{q_1}$ is the greatest Farey fraction less than
$\alpha$, we have
\begin{equation*}
\frac{a}{q}<\frac{a_1}{q_1}<\alpha\text{.}
\end{equation*}
In the case $q_1>q$, we have the trivial inequality
\begin{equation*}
\frac{a_1}{q_1}<\frac{a_1-a}{q_1-q}\text{,}
\end{equation*}
from which deduce that, since $\frac{a_1}{q_1}$ is the greatest Farey
fraction less than $\alpha$,
\begin{equation*}
\alpha<\frac{a_1-a}{q_1-q}\text{,}
\end{equation*}
which contradicts \eqref{lemma1equ1}. In the case $q_1<q$, we have the
similar inequality
\begin{equation*}
\frac{a-a_1}{q-q_1}<\frac{a_1}{q_1}\text{,}
\end{equation*}
from which we deduce that
\begin{equation*}
\frac{a-a_1}{q-q_1}<\alpha\text{,}
\end{equation*}
which also contradicts \eqref{lemma1equ1}. This proves that the
minimum is attained when $n=q_1$. The fact that the maximum is
attained at $n=q_2$ follows by symmetry since
\begin{equation*}
1-\underset{0< n<Q}{\operatorname{max}} \{n \alpha\}=\underset{0<
n<Q}{\operatorname{min}} \{n (1-\alpha)\}\text{.}
\end{equation*}
Now suppose that $\{\sigma_i \alpha \}$ is a point of $S_Q(\alpha)$.
If $\sigma_i \in [0,Q-q_1)$, then the $A$ gap, which is the interval
\begin{equation*}
[0,\{q_1 \alpha\}]\text{,}
\end{equation*}
can be shifted to the interval
\begin{equation*}
[\{\sigma_i \alpha\},\{(\sigma_i+q_1) \alpha\}]\text{,}
\end{equation*}
so that $\sigma_{i+1}=\sigma_{i}+q_1$ in this case. In the case
$\sigma_i \in [q_2,Q)$, the gap between $\sigma_i$ and $\sigma_{i+1}$
is a $B$ gap, so $\sigma_{i+1}=\sigma_{i}-q_2$ in this case. Finally,
when $\sigma_i \in [Q-q_1,q_2)$, the gap is a $C$ gap and so
$\sigma_{i+1}=\sigma_{i}+q_1-q_2$.
\end{proof}
\section{A Lemma Via Kloosterman Sums}
We will transform our integral into a sum of integrals over Farey arcs in a certain region. The resulting sum depends only on the denominators of the Farey fractions. This fact is crucial and explains the heuristic reason why the limit approaches the same distribution function in any interval. We present a general result for any short interval by borrowing Lemma 8 from \cite{BocaCoZa}. We have normalized the function and region by a factor of $Q$. 

\begin{lemma}
\label{lemma2}
\label{dgaplemma2}
Let
\begin{equation*}
\sum_{\Omega}{f}:=\sum_{\begin{array}{c}
(\frac{q_1}{Q},\frac{q_2}{Q})\in \Omega \\ \beta \le \frac{a_1}{q_1}
< \frac{a_2}{q_2} \le \beta+\eta \end{array} }{f} \text{.}
\end{equation*}
Then if $\Omega$ is a convex subregion of $[0,1]\times[0,1]$ with rectifiable boundary, and $f$ is a $C^1$ function on $\Omega$, we have
\begin{gather*}
\left| \frac{1}{\eta}\sum_{\Omega}{f
\left(\frac{q_1}{Q},\frac{q_2}{Q}\right) \frac{1}{Q^2}}-\frac{6}{\pi^2}\iint_{\Omega}f(x,y)dxdy \right|\\
\ll_{\epsilon}
\left(  \left\| \frac{\partial f}{\partial x} \right\|_{\infty}
+ \left\| \frac{\partial f}{\partial y} \right\|_{\infty}   \right)\frac{\operatorname{Area}(\Omega)\log Q}{\eta Q}+ \left\| f \right\|_{\infty} \frac{\left(1 +\operatorname{Len}(\partial\Omega) \right) \log Q}{\eta Q} + \\
\frac{m_{f} \left\| f \right\|_{\infty}}{\eta Q^{1/2-\epsilon}}
\text{,}
\end{gather*}
where $m_f$ is an upper bound for the number of intervals of monotonicity of each of the maps $y\rightarrow f(x,y)$.
\end{lemma}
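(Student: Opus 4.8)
The plan is to obtain this as a rescaled version of Lemma~8 of \cite{BocaCoZa}, which is precisely the present statement written in the unnormalized Farey coordinates. First I would recall that lemma: for a convex, rectifiable region $\Omega'\subseteq[0,N]\times[0,N]$ and a $C^1$ function $g$ on $\Omega'$, it bounds the discrepancy between $\frac1\eta\sum_{\Omega'}g(q_1,q_2)$ — the sum taken over pairs $(q_1,q_2)$ that are denominators of consecutive Farey fractions of order $N$ with $\beta\le a_1/q_1<a_2/q_2\le\beta+\eta$ — and $\frac{6}{\pi^2}\iint_{\Omega'}g\,dx\,dy$, the error being built from $\|g\|_\infty$, $\|\partial g/\partial x\|_\infty+\|\partial g/\partial y\|_\infty$, $\operatorname{Area}(\Omega')$, $\operatorname{Len}(\partial\Omega')$ and the monotonicity count $m_g$. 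I would only sketch its proof: one writes the Farey-neighbor relation $a_2q_1-a_1q_2=1$ together with the coprimality conditions as a M\"obius sum, which collapses to incomplete Kloosterman sums of the shape $\sum_n e(a\overline{n}/q)$; Weil's bound for these is the source of the term of size $Q^{-1/2+\epsilon}$, the monotonicity count $m_g$ enters through a summation-by-parts in one of the summation variables before that bound is applied, and the smooth/geometric part of the argument accounts for the main term and the $\log Q$ losses. The structural point flagged in the paragraph preceding the lemma is that after this reduction the summand depends only on $q_1,q_2$ and not on the $a_i$, which is exactly why the limiting distribution is insensitive to the choice of $[\beta,\beta+\eta]$.

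Granting \cite{BocaCoZa}, the rest is a linear change of variables. I would take $N=Q-1$ (so that ``$q<Q$'' from Lemma~\ref{lemma1} is the Farey order), set $\Omega'=Q\,\Omega$, and put $g(q_1,q_2)=f(q_1/Q,q_2/Q)$; the discrepancy between scaling the denominators by $Q$ versus by $N=Q-1$ is harmless and is absorbed into the error. Under this substitution the two sums coincide term by term, since $(q_1/Q,q_2/Q)\in\Omega\iff(q_1,q_2)\in\Omega'$; moreover $\iint_{\Omega'}g\,dx\,dy=Q^2\iint_{\Omega}f\,dx\,dy$, $\ \|\partial g/\partial q_i\|_\infty=Q^{-1}\|\partial f/\partial x_i\|_\infty$, $\ \operatorname{Area}(\Omega')=Q^2\operatorname{Area}(\Omega)$, $\ \operatorname{Len}(\partial\Omega')=Q\operatorname{Len}(\partial\Omega)$; convexity and rectifiability are scale-invariant, $g$ is $C^1$ precisely when $f$ is, and the number of monotonicity intervals of $y\mapsto g(x,y)$ equals that of $y\mapsto f(x,y)$, so $m_g=m_f$. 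Dividing the inequality of \cite[Lemma~8]{BocaCoZa} through by $\eta Q^2$ and collecting terms then yields exactly the asserted bound, using $1/Q\le1$ to merge the two $\|f\|_\infty$-contributions into the single term $\|f\|_\infty(1+\operatorname{Len}(\partial\Omega))\log Q/(\eta Q)$.

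The only genuinely delicate point is the error-term bookkeeping, so I would present the scaling as a short displayed chain of (in)equalities rather than in prose: one must confirm that \cite[Lemma~8]{BocaCoZa} is stated in a form whose rescaling produces precisely these three error terms and not some weaker combination, and in particular that the at most two Farey arcs straddling $\beta$ and $\beta+\eta$ (the discrepancy between summing over arcs contained in $[\beta,\beta+\eta]$ and integrating over $[\beta,\beta+\eta]$ itself) contribute no more than $\|f\|_\infty(1+\operatorname{Len}(\partial\Omega))\log Q/(\eta Q)$. Everything else — checking that the hypotheses of \cite[Lemma~8]{BocaCoZa} transfer and that the powers of $Q$ land where claimed — is routine.
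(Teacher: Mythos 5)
Your proposal is correct and follows essentially the same route as the paper, which gives no independent proof but simply invokes Lemma~8 of \cite{BocaCoZa} and notes that the function and region have been normalized by a factor of $Q$; your explicit rescaling bookkeeping ($\Omega'=Q\,\Omega$, $g(q_1,q_2)=f(q_1/Q,q_2/Q)$, and the induced scalings of the area, boundary length, derivatives and $m_f$) is exactly the verification the paper leaves implicit.
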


\section{Proof of Theorem \ref{dgapthm1}}
We will handle the contribution of the three gaps of each type $A$,
$B$, and $C$ separately by partitioning the interval $[\beta,\beta+\eta]$ into Farey arcs with denominators strictly less than $Q$. Lemma
\ref{dgaplemma2} will then enable us to calculate the limit as $Q \to\infty$. We will deal with the error terms when we prove a general result for $g_k$ in the next section. In the interval $[\frac{a_1}{q_1},\frac{a_2}{q_2}]$, the $A$ gaps
contribute the amount
\begin{equation*}
\int\limits_{\frac{a_1}{q_1}}^{\frac{a_2}{q_2}}{\frac{1}{Q}\left\{\begin{array}{ll}
Q-q_1\text{,} & q_1 \alpha-a_1 \ge \lambda/Q \\ 0\text{,} & q_1
\alpha-a_1 < \lambda/Q \end{array} \right. d \alpha}\text{,}
\end{equation*}
since the $A$ gaps are $Q-q_1$ in number by Lemma \ref{lemma1}. If a
new integration variable $t$ defined by
\begin{equation}
\label{integral_substitution}
\alpha = \frac{a_1}{q_1}+\frac{t}{q_1 q_2}
\end{equation}
is introduced, the integral becomes
\begin{equation*}
\int\limits_{0}^{1}{\frac{dt}{Q q_1 q_2} \left\{\begin{array}{ll}
Q-q_1\text{,} & t \ge \frac{\lambda q_2}{Q} \\ 0\text{,} & t<
\frac{\lambda q_2}{Q} \end{array} \right.}=\left\{ \begin{array}{ll}
\frac{Q-q_1}{Q q_1 q_2}\left(1-\frac{\lambda q_2}{Q}\right)\text{,} &
0 \le \frac{\lambda q_2}{Q} \le 1 \\ 0\text{,} & 1 < \frac{\lambda
q_2}{Q} \end{array} \right. \text{.}
\end{equation*}
To compute the total contribution of the $A$ gaps to the function
$g_1(\lambda)$ we need to sum this expression over consecutive pairs
of Farey fractions in the interval $[\beta,\beta+\eta]$ as
\begin{equation*}
\frac{1}{\eta}\sum_{\beta \le \frac{a_1}{q_1} < \frac{a_2}{q_2} \le
\beta+\eta}{\left\{ \begin{array}{ll} \frac{Q-q_1}{Q q_1
q_2}\left(1-\frac{\lambda q_2}{Q}\right)\text{,} & 0 \le \frac{q_2}{Q}
\le \frac{1}{\lambda} \\ 0\text{,} & \frac{1}{\lambda} < \frac{q_2}{Q}
\end{array} \right.}\text{.}
\end{equation*}
By Lemma \ref{lemma2}, this sum converges to the integrals
\begin{equation*}
\frac{6}{\pi^2}\left\{\begin{array}{ll}\int\limits_{0}^{1}
\int\limits_{1-y}^{1}{\frac{(1-x)(1-\lambda y)}{x y}dx dy}\text{,} & 0
\le \lambda \le 1 \\ \int\limits_{0}^{\frac{1}{\lambda}}
\int\limits_{1-y}^{1}{\frac{(1-x)(1-\lambda y)}{x y}dx dy}\text{,} &
1<\lambda
\end{array}
\right. \text{,}
\end{equation*}
which further equals
\begin{equation}
\label{A_contribution}
\frac{6}{\pi^2}\left\{\begin{array}{ll}-1-\frac{\lambda }{2}+\frac{\pi
^2}{6}\text{,} & 0 \le \lambda \le 1 \\ -1-\frac{1}{2 \lambda
}+(1-\lambda ) \log
\left(1-\frac{1}{\lambda
}\right)+\operatorname{Li}_2\left(\frac{1}{\lambda }\right)\text{,} & 1<\lambda
\end{array}
\right. \text{,}
\end{equation}
and this is the contribution of the $A$ gaps to the function $g_1(\lambda)$.
The $B$ gaps contribute the amount
\begin{equation*}
\int\limits_{\frac{a_1}{q_1}}^{\frac{a_2}{q_2}}{\frac{1}{Q}\left\{\begin{array}{ll}
Q-q_2\text{,} & a_2-q_2 \alpha \ge \lambda/Q \\ 0\text{,} & a_2- q_2
\alpha < \lambda/Q \end{array} \right. d \alpha}
\end{equation*}
in the interval $[\frac{a_1}{q_1},\frac{a_2}{q_2}]$ since they are
$Q-q_2$ in number. The calculations to complete the total contribution
of the $B$ gaps are similar to those of the $A$ gaps, and
the total contribution is found to be the same as
\eqref{A_contribution}. By Lemma \ref{lemma1}, the $C$ gaps are
$q_1+q_2-Q$ in number, and thus contribute the amount
\begin{equation*}
\int\limits_{\frac{a_1}{q_1}}^{\frac{a_2}{q_2}}{\frac{1}{Q}\left\{\begin{array}{ll}
q_1+q_2-Q\text{,} & (q_1 \alpha-a_1)+(a_2-q_2 \alpha) \ge \lambda/Q \\
0\text{,} & (q_1 \alpha-a_1)+(a_2-q_2 \alpha) < \lambda/Q \end{array}
\right. d \alpha}
\end{equation*}
in the interval $[\frac{a_1}{q_1},\frac{a_2}{q_2}]$. With the
substitution \eqref{integral_substitution}, this integral becomes
\begin{equation*}
\int\limits_{0}^{1}{\frac{dt}{N q_1 q_2} \left\{\begin{array}{ll}
q_1+q_2-N\text{,} & \frac{1-t}{q_1}+\frac{t}{q_2} \ge
\frac{\lambda}{N} \\ 0\text{,} & \frac{1-t}{q_1}+\frac{t}{q_2} \ge
\frac{\lambda}{N} \end{array} \right.}
\end{equation*}
where the numerators $a_1$ and $a_2$ of the Farey fractions have
conveniently canceled out. If $q_2<q_1$, this last integral has the
evaluation
\begin{equation*}
\left\{\begin{array}{ll} \frac{q_1+q_2-N}{N q_1 q_2}\text{,} &
\frac{q_1}{N}<\frac{1}{\lambda} \\ \frac{q_1+q_2-N}{N q_1 q_2}
\frac{q_1 \left(N-\lambda q_2\right)}{N \left(q_1-q_2\right)} \text{,}
& \frac{q_2}{N}<\frac{1}{\lambda}<\frac{q_1}{N} \\ 0\text{,} &
\frac{1}{\lambda}<\frac{q_2}{N} \end{array} \right. \text{,}
\end{equation*}
while if $q_1<q_2$, the substitution $t \to 1-t$ shows that the
integral has the same evaluation with $q_1$ and $q_2$ switched:
\begin{equation*}
\left\{\begin{array}{ll} \frac{q_1+q_2-N}{N q_1 q_2}\text{,} &
\frac{q_2}{N}<\frac{1}{\lambda} \\ \frac{q_1+q_2-N}{N q_1 q_2}
\frac{q_2 \left(N-\lambda q_1\right)}{N \left(q_2-q_1\right)} \text{,}
& \frac{q_1}{N}<\frac{1}{\lambda}<\frac{q_2}{N} \\ 0\text{,} &
\frac{1}{\lambda}<\frac{q_1}{N} \end{array} \right. \text{.}
\end{equation*}
Thus, the total contribution of the $C$ gaps in the interval $[a,b]$ is
\begin{align*}
&\frac{1}{\eta}\sum_{\underset{q_2<q_1}{\beta \le
\frac{a_1}{q_1} < \frac{a_2}{q_2} \le \beta+\eta}}
{\left\{\begin{array}{ll} \frac{q_1+q_2-N}{N q_1 q_2}\text{,} &
\frac{q_1}{N}<\frac{1}{\lambda} \\ \frac{q_1+q_2-N}{N q_1 q_2}
\frac{q_1 \left(N-\lambda q_2\right)}{N \left(q_1-q_2\right)} \text{,}
& \frac{q_2}{N}<\frac{1}{\lambda}<\frac{q_1}{N} \\ 0\text{,} &
\frac{1}{\lambda}<\frac{q_2}{N} \end{array} \right.}\\
+&\frac{1}{\eta}\sum_{\underset{q_1<q_2}{\beta \le
\frac{a_1}{q_1} < \frac{a_2}{q_2} \le \beta+\eta}}
{\left\{\begin{array}{ll} \frac{q_1+q_2-N}{N q_1 q_2}\text{,} &
\frac{q_2}{N}<\frac{1}{\lambda} \\ \frac{q_1+q_2-N}{N q_1 q_2}
\frac{q_2 \left(N-\lambda q_1\right)}{N \left(q_2-q_1\right)} \text{,}
& \frac{q_1}{N}<\frac{1}{\lambda}<\frac{q_2}{N} \\ 0\text{,} &
\frac{1}{\lambda}<\frac{q_1}{N} \end{array} \right.}\text{.}
\end{align*}
By Lemma \ref{lemma2}, the first of these sums approaches the integrals
\begin{align*}
\frac{6}{\pi^2}\left\{\begin{array}{ll}\int\limits_{\frac{1}{2}}^{1}
\int\limits_{1-x}^{x}{\frac{x+y-1}{x y}dy dx}\text{,} & 0 \le \lambda
\le 1 \\ \int\limits_{\frac{1}{2}}^{\frac{1}{\lambda}}
\int\limits_{1-x}^{x}{\frac{x+y-1}{x y} dy
dx}+\int\limits_{\frac{1}{\lambda}}^{1}
\int\limits_{1-x}^{\frac{1}{\lambda}}{\frac{x+y-1}{y}\frac{1-\lambda
y}{x-y} dy dx}\text{,} & 1<\lambda < 2 \\
\int\limits_{0}^{\frac{1}{\lambda}}
\int\limits_{1-y}^{1}{{\frac{x+y-1}{y}\frac{1-\lambda y}{x-y}dx
dy}}\text{,} & 2 \le \lambda
\end{array}
\right. \text{,}
\end{align*}
and the second sum approaches the same set of integrals after the
change of variable $x \leftrightarrow y$ is made. Now, these integrals
can also be evaluated with the Dilogarithm, and the result is the
expression

\begin{equation}
\label{C_contribution}
\frac{6}{\pi^2}\left\{\begin{array}{ll}1-\frac{\pi ^2}{12}\text{,} & 0
\le \lambda \le 1 \\\\

\begin{array}{l} \frac{1}{2}-\frac{\pi
^2}{3}+\frac{\log ^2(2)}{2} + \frac{1}{2 \lambda }+\frac{ \lambda}{4}
\log \left(\frac{2}{\lambda }-1\right)-\frac{1}{\lambda }\log
\left(\frac{2-\lambda }{\lambda -1}\right)-\\
\log \left(\frac{\lambda -1}{\lambda}\right)
+\frac{1}{2} \log \left(\frac{\lambda }{4}\right) \log (\lambda
)+\operatorname{Li}_2\left(\frac{1}{\lambda
}\right)+\operatorname{Li}_2\left(\frac{\lambda }{2}\right)
\end{array}
\text{,} & 1<\lambda<2 \\\\

\frac{3}{4}-\frac{\pi ^2}{12}-\frac{\log ^2(2)}{2}+\frac{\log
(2)}{2}\text{,} & \lambda = 2\\\\

\begin{array}{l}\frac{1}{2}+\frac{1}{2 \lambda }+\frac{ \lambda}{4} \log
\left(1-\frac{2}{\lambda }\right)-\log
\left(1-\frac{1}{\lambda }\right)+\frac{1}{\lambda}\log
\left(\frac{\lambda -1}{\lambda
-2}\right)+\\
\operatorname{Li}_2\left(\frac{1}{\lambda
}\right)-\operatorname{Li}_2\left(\frac{2}{\lambda }\right)\end{array}\text{,} &
2<\lambda
\end{array}
\right.
\end{equation}

for the contribution of the first sum to the function $g_1(\lambda)$.
Thus, $g_1(\lambda)$ is the twice the sum of \eqref{C_contribution}
and \eqref{A_contribution}. Since
\begin{equation*}
\operatorname{Li}_{2}^{\prime}(z)=-\frac{\log (1-z)}{z}\text{,}
\end{equation*}
it is now straightforward to calculate $g_1^{\prime}(\lambda)$ and verify that it
is differentiable everywhere, that is, the following integral
representation is valid:
\begin{equation*}
g_1(\lambda)=\int\limits_{0}^{\frac{1}{\lambda}}{-g_1^{\prime}\left(\frac{1}{x}\right)\frac{d
x}{x^2}}\text{,}
\end{equation*}
where
\begin{equation*}
-g_1^{\prime}\left(\frac{1}{x}\right)=\frac{6}{\pi^2}\left\{\begin{array}{ll}
x+\log \frac{|1-x|^{2 (1-x)^2}}{|1-2 x|^{{\frac{1}{2}(1-2
x)^2}}}\text{,} & 0< x < 1\\
1 \text{,} & 1 < x
\end{array}
\right. \text{.}
\end{equation*}
\section{The Joint Distribution from $k$-Dimensional Spacing}
We now focus on generalizing Theorem \ref{dgapthm1}, but not by introducing a $k$-tuple of $\alpha$'s but
instead we consider a $k$-tuple of $\lambda$'s as in definition
\ref{dgap2}. In the previous theorem two key properties were used: the fact that
the number of gaps of each of the three sizes is a simple function of
$q_1$, and $q_2$, and the fact that all of the numerators of the Farey
fraction canceled out of the final calculations. The exact same
phenomenon happens in the general case for the function $g_k$. An
exact statement about the number of times a given sequence of
consecutive gap sizes appears is given in the following lemma. Set
\begin{equation*}
||I||=\operatorname{Length}(I \cap [0,1])\text{,}
\end{equation*}
and let $T$ be the triangle
\begin{equation*}
\{(x,y): x \le 1,\ y \le 1, \ x+y \ge 1 \} \text{.}
\end{equation*}
\begin{lemma}
\label{lemma10}
For any sequence $\gamma_1 \gamma_2 \cdots\gamma_k$ of the gap sizes
$A$, $B$, and $C$, there is a continuous function
\begin{equation*}
f_{\gamma_1\gamma_2 \cdots \gamma_k}(x,y) : T \to [0,1]
\end{equation*}
such that for $q_1$ and $q_2$ as in Lemma \ref{lemma1},
\begin{equation*}
{\frac{\#\{ \Gamma \in G_{Q,k}(\alpha) : \Gamma = \gamma_1 \gamma_2
\cdots\gamma_k \} }{Q}}=f_{\gamma_1 \gamma_2 \cdots \gamma_k}
\left(\frac{q_1}{Q},\frac{q_2}{Q}\right)\text{.}
\end{equation*}
That is, the average number of times the sequence $\gamma_1 \gamma_2
\cdots \gamma_k$ appears as a sequence of consecutive gap sizes in
$S_Q(\alpha)$ may be interpolated by a continuous function uniformly
in $Q$.
\end{lemma}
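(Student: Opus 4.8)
The plan is to read the entire $k$-tuple of consecutive gaps starting at a given point directly off the recurrence for $\sigma$ in Lemma~\ref{lemma1}, and thereby to identify the count in the statement with the number of residues in an intersection of arcs of $\mathbb{Z}/Q\mathbb{Z}$ all of whose endpoints are integer linear combinations of $Q$, $q_1$ and $q_2$; the resulting exact arithmetic is what produces a genuine (not merely approximate) continuous interpolant. First I would set up the ``successor map''. Identifying the index set $\{0,1,\dots,Q-1\}$ with $\mathbb{Z}/Q\mathbb{Z}$, Lemma~\ref{lemma1} says that the map $\tau$ sending the index of a point of $S_Q(\alpha)$ to the index of its right neighbour (applied cyclically, consistently with the wrap-around in $G_{Q,k}$) is $\tau(m)=m+s(m)$, where $s(m)$ equals $q_1$, $q_1-q_2$, or $-q_2$ according as $m$ lies in $R_A:=[0,Q-q_1)$, $R_C:=[Q-q_1,q_2)$, or $R_B:=[q_2,Q)$, and that the gap immediately to the right of the point indexed $m$ has type $A$, $C$, or $B$ in those three cases respectively; note $R_A,R_C,R_B$ partition $[0,Q)$ because $q_1,q_2<Q$ and $q_1+q_2\ge Q$ (the latter since the number of $C$ gaps, $q_1+q_2-Q$, is nonnegative by Remark~\ref{numbers}). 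Since for $k\ge 1$ the set $G_{Q,k}(\alpha)$ is exactly the list of the $Q$ tuples obtained by starting at each point of $S_Q(\alpha)$ and reading off the next $k$ gaps, a short induction on $l$ using $\tau^{\,l}(m)=m+\sum_{i=1}^{l}s\big(\tau^{\,i-1}(m)\big)$ shows that the $k$-tuple of gap types beginning at $m$ equals $\gamma_1\gamma_2\cdots\gamma_k$ if and only if $m+c_l\in R_{\gamma_{l+1}}$ for every $l=0,1,\dots,k-1$, where $c_0=0$ and $c_l:=s_{\gamma_1}+\cdots+s_{\gamma_l}$ with $s_A=q_1$, $s_B=-q_2$, $s_C=q_1-q_2$. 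Consequently
\[
\#\{\Gamma\in G_{Q,k}(\alpha):\Gamma=\gamma_1\cdots\gamma_k\}=\#\Big\{m\in\mathbb{Z}/Q\mathbb{Z}:\ m\in\bigcap_{l=0}^{k-1}\big(R_{\gamma_{l+1}}-c_l\big)\Big\}.
\]

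Next I would use that every endpoint in sight is an integer. Each $c_l$ is an integer combination of $q_1$ and $q_2$ whose coefficients depend only on the word $\gamma_1\cdots\gamma_k$, and the endpoints of $R_A,R_B,R_C$ lie among $0,Q-q_1,q_2,Q$; hence, after reduction modulo $Q$, each translated set $R_{\gamma_{l+1}}-c_l$ is an arc of the circle $\mathbb{R}/Q\mathbb{Z}$ with integer endpoints, and the intersection over $l$ is a finite disjoint union of such arcs. Because the number of residues in a half-open integer-endpoint arc $[a,b)$ equals its length $b-a$, summing over the components gives
\[
\frac{1}{Q}\,\#\{\Gamma\in G_{Q,k}(\alpha):\Gamma=\gamma_1\cdots\gamma_k\}=\operatorname{Leb}\Big(\bigcap_{l=0}^{k-1}\big(\tfrac{1}{Q}R_{\gamma_{l+1}}-\tfrac{1}{Q}c_l\big)\Big),
\]
the Lebesgue measure of an intersection of arcs of $\mathbb{R}/\mathbb{Z}$. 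This dictates the definition: with $x=q_1/Q$ and $y=q_2/Q$, put $\rho_A(x,y)=[0,1-x)$, $\rho_C(x,y)=[1-x,y)$, $\rho_B(x,y)=[y,1)$ as arcs of $\mathbb{R}/\mathbb{Z}$, let $\hat c_l(x,y)$ be the linear form obtained from $c_l$ by the substitution $q_1\mapsto x$, $q_2\mapsto y$, and set
\[
f_{\gamma_1\cdots\gamma_k}(x,y):=\operatorname{Leb}\Big(\bigcap_{l=0}^{k-1}\big(\rho_{\gamma_{l+1}}(x,y)-\hat c_l(x,y)\big)\Big).
\]
By the displayed identity, $f_{\gamma_1\cdots\gamma_k}(q_1/Q,q_2/Q)$ is the required normalized count, and $f_{\gamma_1\cdots\gamma_k}$ takes values in $[0,1]$ since $\operatorname{Leb}(\mathbb{R}/\mathbb{Z})=1$.

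It remains to check continuity on $T$. For $(x,y)\in T$ one has $0\le 1-x\le y\le 1$, so $\rho_A,\rho_C,\rho_B$ are (possibly degenerate) arcs of nonnegative lengths $1-x$, $x+y-1$, $1-y$; thus each of the $k$ arcs $\rho_{\gamma_{l+1}}(x,y)-\hat c_l(x,y)$ has length affine in $(x,y)$ and left endpoint equal to the reduction modulo $1$ of an affine function, so both depend continuously on $(x,y)$ on $T$. Since displacing the left endpoint of an arc by $\varepsilon$ or its length by $\delta$ changes its indicator function by at most $2\varepsilon+\delta$ in $L^1(\mathbb{R}/\mathbb{Z})$, and each indicator is bounded by $1$, the product $\prod_{l}\mathbf{1}_{\rho_{\gamma_{l+1}}-\hat c_l}$ varies continuously in $L^1(\mathbb{R}/\mathbb{Z})$; integrating over $\mathbb{R}/\mathbb{Z}$ shows that $f_{\gamma_1\cdots\gamma_k}$ is continuous on $T$, which completes the proof.

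The step I expect to be the main obstacle is purely one of careful bookkeeping in the first two paragraphs: pinning down the wrap-around convention for $G_{Q,k}(\alpha)$ and the half-open conventions for $R_A,R_B,R_C$ so that ``number of tuples $=$ number of starting residues'' is an exact equality, and likewise so that ``number of residues in an arc $=$ its integer length'' holds on the nose. It is this exactness, rather than an $O(1)$ discrepancy, that forces $f_{\gamma_1\cdots\gamma_k}$ to be precisely the interpolant and, being a piecewise-linear measure of an intersection of arcs, ultimately explains the piecewise-linear shape of $g_2$ recorded in \eqref{g2example}. Everything else — the integrality observation, the reinterpretation as a Lebesgue measure, and the $L^1$ continuity estimate — is routine.
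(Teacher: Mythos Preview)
Your proposal is correct and takes essentially the same approach as the paper: use the recurrence from Lemma~\ref{lemma1} to rewrite the count of occurrences of the word $\gamma_1\cdots\gamma_k$ as the number of starting indices $\sigma$ lying in an intersection of $k$ translated intervals whose endpoints are integer linear combinations of $Q,q_1,q_2$, then observe that integrality makes this count equal to the length of the intersection, which depends continuously on $(q_1/Q,q_2/Q)$. The only cosmetic differences are that the paper works with genuine intervals of $\mathbb{R}$ via the notation $\|I\|=\operatorname{Length}(I\cap[0,1])$ and argues by the explicit worked example $\gamma_1\gamma_2\gamma_3=ABC$, whereas you phrase everything on the circle $\mathbb{R}/\mathbb{Z}$ and add an explicit $L^1$-continuity argument that the paper leaves as ``clearly a continuous function.''
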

\begin{remark}
By Remark \ref{numbers}, we have
\begin{align*}
f_A(x,y)&=1-x\text{,}\\
f_B(x,y)&=1-y\text{,}\\
f_C(x,y)&=x+y-1\text{.}
\end{align*}
\end{remark}
\begin{proof}
By Lemma \ref{lemma1}, we may write $f_{\gamma_1 \gamma_2 \cdots
\gamma_k} \left(\frac{q_1}{Q},\frac{q_2}{Q}\right)$ as the length of
the intersection of $k$ intervals whose endpoints are continuous
functions of $q_1$ and $q_2$. The general truth of this fact will be
evident from the proof of the particular case $\gamma_1
\gamma_2 \cdots \gamma_k = ABC$, for example. By Lemma \ref{lemma1} we
have

\begin{align*}
&{\frac{\#\{ \Gamma \in G_{Q,3}(\alpha) : \Gamma = ABC\}}{Q}}\\
&=\frac{1}{Q} \# \left\{0 \le i <Q :
\begin{array}{l}
\{\sigma_{i+1} \alpha\} - \{\sigma_{i+0} \alpha\} = A\text{,}\\
\{\sigma_{i+2} \alpha\} - \{\sigma_{i+1} \alpha\} = B\text{,}\\
\{\sigma_{i+3} \alpha\} - \{\sigma_{i+2} \alpha\} = C
\end{array} \right\}\\
&=\frac{1}{Q} \# \left\{0 \le \sigma <Q : \begin{array}{l}
\sigma \in [0,Q-q_1)\text{,}\\
\sigma+q_1 \in [q_2,Q)\text{,}\\
\sigma+q_1-q_2 \in [Q-q_1,q_2)
\end{array} \right\}\\
&= \frac{1}{Q} || [0,Q-q_1) \cap [q_2-q_1,Q-q_1) \cap
[Q+q_2-2q_1,2q_1-q_1) || \text{.}
\end{align*}

In this case, we then have
\begin{equation*}
f_{ABC}(x,y)=||[0,1-x) \cap [y-x,1-x) \cap [1+y-2x,2x-y)||\text{,}
\end{equation*}
which is clearly a continuous function.
\end{proof}
\begin{theorem}
\label{gktheorem}
For any integer $k$, the limit
\begin{align*}
g_k(\lambda_1,\dots,\lambda_k)&=\lim_{Q \to \infty} {g_k^{\beta,\eta}(\lambda_1,\dots,\lambda_k;Q)}\\
&=\lim_{Q\to\infty}{\frac{1}{\eta}\int\limits_{\beta}^{\beta+\eta}
{\frac{\#\{ \gamma_1 \cdots \gamma_k \in G_{Q,k}(\alpha) : \forall_{i
\le k} \gamma_i \ge \frac{\lambda_i}{Q} \} }{Q}} d\alpha}
\end{align*}
exists and is independent of the interval $[\beta,\beta+\eta]$ on which the average
is taken. A formula for $g_k^{\beta,\eta}:=g_k^{\beta,\eta}(\lambda_1,\dots,\lambda_k;Q)$ and an error estimate is given by:

\begin{gather*}
\left| g_k^{\beta,\eta} -\frac{1}{\zeta(2)} \sum_{\gamma_1
\cdots \gamma_k \in \{A,B,C\}^k} \iint\limits_{T} f_{g_1 \cdots
g_k}(x,y) || \chi_{g_1}(\lambda_1) \cap \cdots \cap
\chi_{g_k}(\lambda_k) || \frac{d x d y}{x y}\right|\\
\ll_{k,\epsilon} (1+\lambda_1)\cdots(1+\lambda_k) \frac{Q^{\epsilon-1/2}}{\eta}
\end{gather*}

where
\begin{align*}
\chi_{A}(\lambda)=\chi_{A}(x,y,\lambda)&=[\lambda y,\infty)&\\
\chi_{B}(\lambda)=\chi_{B}(x,y,\lambda)&=(-\infty,1-\lambda x]&\\
\chi_{C}(\lambda)=\chi_{C}(x,y,\lambda)&=\left\{\begin{array}{ll} [ \frac{y (\lambda x-1)}{x-y},\infty ) & y<x
\\ (-\infty , 1- \frac{x (\lambda y-1)}{y-x} ] & x<y \end{array}
\right. \text{,}
\end{align*}
and $f_{\gamma_1,\dots,\gamma_k}(x,y)$ is defined in Lemma
\ref{lemma10}. The function $g_k(\lambda_1,\dots,\lambda_k)$ also
satisfies
\begin{equation*}
g_k(\lambda_1,\lambda_2,\dots,\lambda_{k-1},\lambda_k)=g_k(\lambda_k,\lambda_{k-1},\dots,\lambda_2,\lambda_1)\text{.}
\end{equation*}
\end{theorem}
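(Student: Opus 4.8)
The plan is to follow the exact same three-stage strategy that worked for Theorem \ref{dgapthm1}, but now carrying a general tuple $\gamma_1\cdots\gamma_k$ through each stage. First I would open up the integrand of $g_k^{\beta,\eta}$ by partitioning $[\beta,\beta+\eta]$ into the Farey arcs $[\frac{a_1}{q_1},\frac{a_2}{q_2}]$ with $q_1,q_2<Q$. On each such arc, Lemma \ref{lemma1} says the three gap lengths are $A=q_1\alpha-a_1$, $B=a_2-q_2\alpha$, $C=A+B$, so the condition $\gamma_i\ge\lambda_i/Q$ translates into a linear inequality in $\alpha$ whose solution set is one of the three half-lines $\chi_{\gamma_i}$ (after the substitution $\alpha=\frac{a_1}{q_1}+\frac{t}{q_1q_2}$, which as in Section 4 makes the numerators $a_1,a_2$ cancel and leaves the condition depending only on $q_1/Q$, $q_2/Q$, $t$, and $\lambda_i$). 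The count $\#\{\Gamma\in G_{Q,k}(\alpha):\Gamma=\gamma_1\cdots\gamma_k\}$ is $Q\,f_{\gamma_1\cdots\gamma_k}(q_1/Q,q_2/Q)$ by Lemma \ref{lemma10}. Summing over all tuples $\gamma_1\cdots\gamma_k\in\{A,B,C\}^k$ reconstructs the full count in the integrand, so on each arc the contribution to $g_k^{\beta,\eta}$ is
\begin{equation*}
\frac{1}{\eta}\sum_{\gamma_1\cdots\gamma_k}\frac{1}{Q^2 q_1' q_2'}\,f_{\gamma_1\cdots\gamma_k}\!\left(\tfrac{q_1}{Q},\tfrac{q_2}{Q}\right)\Big\|\chi_{\gamma_1}(\lambda_1)\cap\cdots\cap\chi_{\gamma_k}(\lambda_k)\Big\|,
\end{equation*}
where $q_i'=q_i/Q$ and the norm $\|\cdot\|$ picks out the length of the admissible $t$-range in $[0,1]$.

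Second, I would apply Lemma \ref{lemma2} to each of the $3^k$ sums, with $\Omega=T$ (intersected with the relevant region) and with $f(x,y)=\frac{1}{xy}f_{\gamma_1\cdots\gamma_k}(x,y)\,\|\chi_{\gamma_1}(\lambda_1)\cap\cdots\cap\chi_{\gamma_k}(\lambda_k)\|$. This turns the sum into $\frac{6}{\pi^2}=\frac{1}{\zeta(2)}$ times the double integral over $T$ of that $f$, plus error terms. Summing over the $3^k$ tuples produces exactly the main term in the theorem. For the error, one needs to check that each $f$ here is $C^1$ on the relevant subregion (away from the diagonal $x=y$, where $\chi_C$ switches form, and away from the corners of $T$ where $1/(xy)$ blows up — these are handled by a small-region cutoff as in Section 4), and that $\|f\|_\infty$, $\|\partial f/\partial x\|_\infty$, $\|\partial f/\partial y\|_\infty$, $\mathrm{Len}(\partial\Omega)$, and $m_f$ are all bounded by a constant depending on $k$ times a polynomial in $1+\lambda_i$. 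Since $\|\chi_{\gamma_1}(\lambda_1)\cap\cdots\|\le 1$ and the admissible region shrinks like $1/\lambda_i$ in each coordinate, the worst growth is $(1+\lambda_1)\cdots(1+\lambda_k)$, which feeds straight into Lemma \ref{lemma2}'s bound to give the stated $\ll_{k,\epsilon}(1+\lambda_1)\cdots(1+\lambda_k)Q^{\epsilon-1/2}/\eta$; the $\log Q/Q$ terms are absorbed into $Q^{\epsilon-1/2}$. Independence of $\beta$ and $\eta$ is then immediate because the main term contains neither.

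Third, the symmetry $g_k(\lambda_1,\dots,\lambda_k)=g_k(\lambda_k,\dots,\lambda_1)$ I would prove not from the formula but structurally: reversing a tuple of consecutive gaps in $S_Q(\alpha)$ corresponds to the reflection $\alpha\mapsto 1-\alpha$ of the circle, which sends $\{n\alpha\}$ to $\{-n\alpha\}$ and reverses the cyclic order of the points, hence reverses every block of consecutive gaps; it also interchanges the roles of $A$ and $B$ (equivalently $q_1\leftrightarrow q_2$) while fixing $C$, and it sends the averaging interval $[\beta,\beta+\eta]$ to $[1-\beta-\eta,1-\beta]$, which by the already-proved $\beta$-independence does not matter. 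Alternatively, and perhaps more cleanly, one can read the symmetry off the integral representation: the change of variables $(x,y)\mapsto(y,x)$ fixes $T$ and $dx\,dy/(xy)$, sends $f_{\gamma_1\cdots\gamma_k}$ to $f_{\gamma_k'\cdots\gamma_1'}$ where $'$ swaps $A\leftrightarrow B$, and sends $\chi_A\leftrightarrow\chi_B$, $\chi_C\leftrightarrow\chi_C$ (with the $y<x$ and $x<y$ branches swapping), so the whole sum over $\{A,B,C\}^k$ is invariant under simultaneously reversing the tuple and the list of $\lambda$'s.

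I expect the main obstacle to be the error-term bookkeeping in the second stage: one must verify uniformly in the $\lambda_i$ and in $k$ that the functions $f$ feeding into Lemma \ref{lemma2} are genuinely $C^1$ after the corner cutoffs, that the number $m_f$ of monotonicity intervals of $y\mapsto f(x,y)$ stays bounded (it grows with $k$ because each $\chi_{\gamma_i}$ contributes a breakpoint, but is still $O_k(1)$), and that the $1/(xy)$ singularity at the two corners $(1,0)$ and $(0,1)$ of $T$ contributes an acceptable error — this last point requires excising $\epsilon_0$-neighborhoods of the corners, bounding the excised integral directly, and optimizing $\epsilon_0$ against the Lemma \ref{lemma2} error, exactly the miscellaneous work alluded to at the end of Section 4. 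The translation of the half-line conditions $\chi_{\gamma_i}$ themselves is routine once the $k=1$ case in Section 4 is in hand, and the $C$-gap case splitting on $q_1\lessgtr q_2$ is already done there.
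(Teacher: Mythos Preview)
Your proposal is correct and follows essentially the same route as the paper: the Farey-arc decomposition, the $3^k$ splitting, the substitution $\alpha=\frac{a_1}{q_1}+\frac{t}{q_1q_2}$, the application of Lemma~\ref{lemma2} after excising from $T$ both a neighborhood of the hypotenuse (for the $1/(xy)$ blow-up) and a neighborhood of the diagonal (for the $\chi_C$ derivative), and the symmetry via $\alpha\mapsto 1-\alpha$ all appear in the paper in the same order. The paper makes the cutoff explicit as $T_\delta=\{x+y\ge 1+\delta,\ |x-y|>\delta\}$ and records the $\delta$-dependence of $\|f\|_\infty\ll 1/\delta$ and $\|\nabla f\|_\infty\ll \Lambda/\delta^2$ before balancing against the $O(\delta/\eta)$ contribution from $T\setminus T_\delta$, which is exactly the optimization you anticipate.
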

\begin{proof}
We first divide the integral defining $g_k$ along the Farey fractions
in $[\beta,\beta+\eta]$ with denominator strictly less than $Q$:
\begin{equation*}
g_k^{\beta,\eta}=
\frac{1}{\eta}\sum_{ \beta \le \frac{a_1}{q_1} < \frac{a_2}{q_2} \le \beta+\eta }
\frac{1}{Q} \int\limits_{\frac{a_1}{q_1}}^{\frac{a_2}{q_2}} {\#\{
\gamma_1 \cdots \gamma_k \in G_{Q,k}(\alpha) : \forall_{i \le k}
\gamma_i \ge \frac{\lambda_i}{Q} \} } d \alpha
\end{equation*}
We next divide up the gap sequences among all of the $3^k$ possibilities:
\begin{gather*}
= \sum_{\gamma_1 \cdots \gamma_k \in \{A,B,C\}^k}
\frac{1}{\eta} \sum_{ \beta \le \frac{a_1}{q_1} < \frac{a_2}{q_2} \le \beta+\eta } \
\int\limits_{\frac{a_1}{q_1}}^{\frac{a_2}{q_2}} {\frac{\#\{ \Gamma \in
G_{Q,k}(\alpha) : \Gamma = \gamma_1 \cdots\gamma_k \} }{Q}} \\
\times \left\{ \begin{array}{ll} 1\text{,} & \forall_{i \le k} \
\gamma_i \ge \frac{\lambda_i}{Q} \\ 0\text{,} & \exists_{i \le k} \
\gamma_i < \frac{\lambda_i}{Q} \end{array} \right. d \alpha
\end{gather*}
The conditions that an $A$, $B$, or $C$ gap is bigger than
$\frac{\lambda}{Q}$ are by Lemma \ref{lemma1}:
\begin{align*}
q_1 \alpha - a_1 &\ge \frac{\lambda}{Q}\text{,} \quad \text{(for an A gap)}\\
a_2-q_2 \alpha &\ge \frac{\lambda}{Q}\text{,} \quad \text{(for a B gap)}\\
q_1 \alpha - a_1 + a_2-q_2 \alpha &\ge \frac{\lambda}{Q}\text{.} \quad
\text{(for a C gap)}
\end{align*}
Under the change of variable
\begin{equation*}
\alpha = \frac{a_1}{q_1}+\frac{t}{q_1 q_2}
\end{equation*}
these conditions become
\begin{align*}
t \ge \lambda \frac{q_2}{Q}&\text{,} \quad \text{(for an A gap)}\\
t \le 1-\lambda \frac{q_1}{Q}&\text{,} \quad \text{(for a B gap)}\\
\left(q_1-q_2\right) t\geq \frac{\lambda q_1 q_2}{Q}-q_2&\text{,}
\quad \text{(for a C gap)}
\end{align*}
respectively. Thus, we have
\begin{gather*}
\int\limits_{\frac{a_1}{q_1}}^{\frac{a_2}{q_2}}{\frac{\#\{ \Gamma \in
G_{Q,k}(\alpha) : \Gamma = \gamma_1 \cdots\gamma_k \} }{Q}} \left\{
\begin{array}{ll} 1\text{,} & \forall_{i \le k} \ \gamma_i \ge
\frac{\lambda_i}{Q} \\ 0\text{,} & \exists_{i \le k} \ \gamma_i <
\frac{\lambda_i}{Q} \end{array} \right. d \alpha\\
=f_{\gamma_1 \cdots \gamma_k}\left(\frac{q_1}{Q},\frac{q_2}{Q}\right)
||\chi_{\gamma_1}\left(\frac{q_1}{Q},\frac{q_2}{Q},\lambda_1\right)\cap
\cdots \cap \chi_{\gamma_k}\left(\frac{q_1}{Q},\frac{q_2}{Q},\lambda_k\right)||
\frac{1}{q_1 q_2}
\end{gather*}
So, finally,
\begin{equation*}
g_k^{\beta,\eta} (\lambda_1,\dots,\lambda_k;Q)= \frac{1}{\eta} \sum_{T}{f \left (\frac{q_1}{Q},\frac{q_2}{Q} \right)\frac{1}{Q^2}} \text{,}
\end{equation*}
where
\begin{equation*}
f(x,y)=f_{\gamma_1 \cdots
\gamma_k}(x,y) \frac{|| \chi_{\gamma_1}(x,y,\lambda_1) \cap \cdots \cap
\chi_{\gamma_k}(x,y,\lambda_k) ||}{x y} \text{.}
\end{equation*}
In order to apply Lemma \ref{dgaplemma2}, we need bounds for $f$ and its partial derivatives. Since $f$ is unbounded and its partials are unbounded on $T$, the domain must be shrunk to one of the form
\begin{equation*}
T_{\delta}=\{(x,y): x \le 1,\ y \le 1, \ x+y \ge 1+\delta, |x-y|>\delta \} \text{,}
\end{equation*}
and set $\left\| \cdot \right\|_{\infty}$ to be the supremum norm on $T_{\delta}$. Now, the functions $f_{\gamma_1 \cdots \gamma_k}$ are relatively tame; they satisfy
\begin{gather*}
\left\| f_{\gamma_1 \cdots \gamma_k} \right\|_{\infty} \le  1\\
\left\| \frac{\partial f_{\gamma_1 \cdots \gamma_k}}{\partial x} \right\|_{\infty}+ \left\| \frac{\partial f_{\gamma_1 \cdots \gamma_k}}{\partial y} \right\|_{\infty} \le 2k
\end{gather*}
Hence we focus on the functions
\begin{equation*}
F_{\gamma_1 \cdots \gamma_k}(x,y)=\frac{|| \chi_{\gamma_1}(x,y,\lambda_1) \cap \cdots \cap
\chi_{\gamma_k}(x,y,\lambda_k) ||}{x y} \text{.}
\end{equation*}
Since $0 \le F_{\gamma_1 \cdots \gamma_k}(x,y) \le \frac{1}{x y}$,
\begin{equation*}
\left\| F_{\gamma_1 \cdots \gamma_k} \right\|_{\infty} \le \frac{1}{\delta}\text{.}
\end{equation*}
Next, the fact that
\begin{equation*}
\left\| \frac{\partial F_{\gamma_1 \cdots \gamma_k}}{\partial x} \right\|_{\infty}+ \left\| \frac{\partial F_{\gamma_1 \cdots \gamma_k}}{\partial y} \right\|_{\infty} \ll_k \frac{\Lambda}{\delta^2} \text{,}
\end{equation*}
where $\Lambda=(1+\lambda_1)\cdots (1+\lambda_k)$, follows from the following table:
\begin{center}
\begin{tabular}{lll}
   \hline
    $h(x,y)$  & bound for $\left\| h \right\|_{\infty}$ & bound for $\left\| \frac{\partial h}{\partial x} \right\|_{\infty}+ \left\| \frac{\partial h}{\partial y} \right\|_{\infty}$ \\
    \hline
    \hline & &  \\ [-1.2ex]
    $|| \chi_{A}(x,y,\lambda)||$  & $1$ & $\lambda$                \\[1.0ex]
    $|| \chi_{B}(x,y,\lambda)||$  & $1$ & $\lambda$                 \\[1.0ex]
    $|| \chi_{C}(x,y,\lambda)||$  & $1$ & $\left\{\begin{array}{ll} 0 \text{,} & \lambda < 1 \\ 2\lambda \left(1+\frac{1}{\lambda \delta}\right)\text{,} & 1<\lambda<2 \\ 2\lambda \left(1+\frac{1}{\lambda+\lambda \delta-2}\right)\text{,} & 2<\lambda
\end{array}
\right.$\\[1.0ex]
    $\frac{1}{x y}$  & $\frac{1}{\delta}$ & $\frac{2}{\delta^2}$\\[1.0ex]
   \hline
\end{tabular}
\end{center}
In summary, we have
\begin{gather*}
\left\| f \right\|_{\infty} \ll_k  \frac{1}{\delta}\text{,}\\
\left\| \frac{\partial f}{\partial x} \right\|_{\infty}+ \left\| \frac{\partial f}{\partial y} \right\|_{\infty} \ll_k \frac{\Lambda}{\delta^2}\text{.}
\end{gather*}
Therefore, by Lemma \ref{dgaplemma2} with $m_f=O_k(1)$,
\begin{gather*}
\left|\frac{1}{\eta}\sum_{T_{\delta}}{f\left(\frac{q_1}{Q},\frac{q_2}{Q}\right)\frac{1}{Q^2}}-\frac{1}{\zeta(2)}  \iint\limits_{T_{\delta}} {f(x,y) d x d y} \right|\\
\ll_{k,\epsilon}  \frac{\Lambda \log Q}{\eta \delta^2 Q}+\frac{\log Q}{\eta \delta Q}+\frac{Q^{\epsilon}}{\eta \delta Q^{1/2}}\\
\ll_{k,\epsilon}\frac{Q^{\epsilon-1}}{\eta} \left( \frac{\Lambda}{\delta^2}+Q^{1/2}\right)\text{.}
\end{gather*}
Finally,
\begin{align*}
&\left|\frac{1}{\eta}\sum_{T}{f\left(\frac{q_1}{Q},\frac{q_2}{Q}\right)\frac{1}{Q^2}}-\frac{1}{\zeta(2)}  \iint\limits_{T} {f(x,y) d x d y} \right|\\
&\le \left|\frac{1}{\eta}\sum_{T_{\delta}}{f\left(\frac{q_1}{Q},\frac{q_2}{Q}\right)\frac{1}{Q^2}}-\frac{1}{\zeta(2)}  \iint\limits_{T_{\delta}} {f(x,y) d x d y} \right|\\
&+\frac{1}{\eta}\left|\sum_{T \setminus T_{\delta}}{f\left(\frac{q_1}{Q},\frac{q_2}{Q}\right)\frac{1}{Q^2}}\right|+  \left| \frac{1}{\zeta(2)}  \iint\limits_{T \setminus T_{\delta}} {f(x,y) d x d y} \right|\\
&\ll_{k,\epsilon}\frac{Q^{\epsilon-1}}{\eta} \left( \frac{\Lambda}{\delta^2}+Q^{1/2}\right) + \frac{\delta}{\eta} \text{.}
\end{align*}
This error term is optimized with the choice $\delta=Q^{-\epsilon}$, leading to
\begin{equation*}
\left|\frac{1}{\eta}\sum_{T}{f\left(\frac{q_1}{Q},\frac{q_2}{Q}\right)\frac{1}{Q^2}}-\frac{1}{\zeta(2)}  \iint\limits_{T} {f(x,y) d x d y} \right|\ll_{k,\epsilon}\frac{\Lambda Q^{\epsilon-1/2}}{\eta} \text{.}
\end{equation*}
The symmetry of the function $g_k$ can be obtained from the symmetry of the function $g_k^{0,1}(\lambda_1,\dots,\lambda_k;Q)$ as:
\begin{align*}
g_k(\lambda_1,\dots,\lambda_k;Q)&=\lim_{Q \to \infty}{g_k^{0,1}(\lambda_1,\dots,\lambda_k;Q)}\\
&=\lim_{Q \to \infty}{g_k^{0,1}(\lambda_k,\dots,\lambda_1;Q)}\\
&=g_k(\lambda_k,\dots,\lambda_1;Q) \text{.}
\end{align*}
The function $g_k^{0,1}(\lambda_1,\dots,\lambda_k;Q)$ has this symmetry property because the sequence of gaps in $S_Q(1-\alpha)$ is the reverse of the sequence of gaps in $S_Q(\alpha)$ with the $A$ and $B$ gaps switched.
\end{proof}

\section{Explicit Formula for $g_2(\lambda_1,\lambda_2)$}
Theorem \ref{gktheorem} gives a formula for
$g_2(\lambda_1,\lambda_2)$ as an integral over the region $T$. By
dividing up $[0,\infty) \times [0,\infty)$ into the $14$ regions shown in Figure 4,
\begin{figure}
\begin{center}
\includegraphics[width=5in]{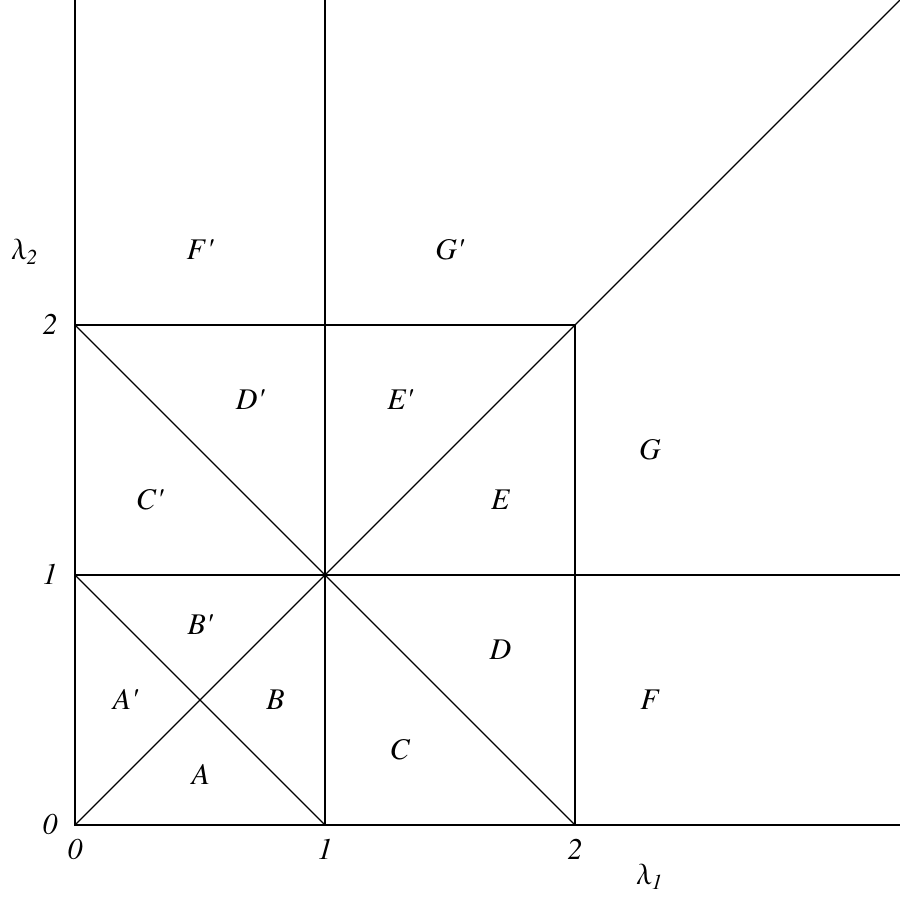}
\end{center}
\label{figure4}
\caption{regions on which $g_2(\lambda_1,\lambda_2)$ is $C^{\infty}$ smooth}
\end{figure}
we calculate $g_2(\lambda_1,\lambda_2)$ on each of the regions $A$,
$B$, $C$, $D$, $E$, $F$, and $G$. The resulting expressions are given in the following theorem.

\begin{theorem}
The explicit formula for $g_2(\lambda_1,\lambda_2)$ on each of the regions $A$,
$B$, $C$, $D$, $E$, $F$, and $G$ in Figure 4 are as follows. The value of
$g_2(\lambda_1,\lambda_2)$ on $A'$, $B'$, $C'$, $D'$, $E'$, $F'$, and
$G'$ may be found using the symmetry property
\begin{equation*}
g_2(\lambda_1,\lambda_2)=g_2(\lambda_2,\lambda_1)
\end{equation*}
given in Theorem \ref{gktheorem}.

\begin{align*}
\frac{\pi^2}{6}g_2(\lambda_1,\lambda_2)|_A &=\frac{\pi ^2}{6}-\lambda
_1-\frac{\lambda _2}{2}\text{,}\\
\frac{\pi^2}{6}g_2(\lambda_1,\lambda_2)|_B &= \frac{\pi
^2}{6}-2+\lambda _1+\frac{3 \lambda _2}{2}-2
\operatorname{Li}_2\left(\lambda _1\right)+2
\operatorname{Li}_2\left(1-\lambda _2\right)\\&+\left(\frac{1}{\lambda _2}+\frac{1}{\lambda
_1}\right) \log \left(\lambda _1+\lambda _2\right)+\left(\frac{1}{\lambda _1}-\lambda _1\right) \log
\left(\frac{1-\lambda _1}{\lambda _2}\right)\\&+\log
\left(\frac{1-\lambda _2}{\lambda
_1}\right) \left(-\lambda _2+\frac{1}{\lambda _2}+2 \log \left(\lambda
_2\right)\right)
\end{align*}

\begin{align*}
\frac{\pi^2}{6}g_2(\lambda_1,\lambda_2)|_C &=\frac{3 \lambda
_2}{2}-\frac{\pi ^2}{3}-1+\log ^2(2)+\left(\frac{2}{\lambda _1}-2 \lambda _1\right) \log \left(\lambda
_1-1\right)\\&+\left(\frac{\lambda _1}{2}-\frac{2}{\lambda
_1}\right) \log \left(2-\lambda _1\right)+\left(\frac{\lambda
_1}{2}-\frac{2}{\lambda _2}\right) \log \left(\lambda _1\right)\\
&+\left(\frac{1}{\lambda _2}+\frac{1}{\lambda _1}\right)
\log \left(\lambda _1+\lambda _2\right)\\
&+\left(\lambda _1-\lambda
_2+\frac{1}{\lambda _2}-\frac{1}{\lambda _1}\right) \log \left(\lambda
_1-\lambda _2\right)\\
&+\log ^2\left(\lambda
_1\right)+\left(\lambda _2-2 \log \left(2\lambda _2\right)\right) \log \left(\lambda _1\right)\\&+\left(2 \log \left(\lambda
_1-\lambda _2\right)-2 \log \left(1-\lambda
_2\right)\right) \log \left(\lambda _2\right)\\
&+2 \operatorname{Li}_2\left(\frac{1}{\lambda _1}\right)+2
\operatorname{Li}_2\left(\frac{\lambda _1}{2}\right)-2
\operatorname{Li}_2\left(\lambda _2\right)\\
&+2\operatorname{Li}_2\left(\frac{\lambda _2-1}{\lambda _2-\lambda
_1}\right)-2 \operatorname{Li}_2\left(\frac{\lambda _1 \left(\lambda
_2-1\right)}{\lambda _2-\lambda _1}\right)\text{,}\\
\end{align*}

\begin{align*}
\frac{\pi^2}{6}g_2(\lambda_1,\lambda_2)&|_D =-\frac{5 \pi ^2}{6}
-\frac{\lambda _1}{2}+\lambda _2+\left(-\lambda _1+\lambda
_2+\frac{2}{\lambda _1}\right) \log (2)\\
&+\left(\frac{3}{\lambda _1}-3 \lambda _1\right) \log
\left(\lambda _1-1\right)+\left(\frac{2}{\lambda _1}-\frac{\lambda
_1}{2} - 2 \log (2) \right) \log \left(\lambda _2\right)\\
&+\left(\lambda
_1-\frac{4}{\lambda _1}\right) \log \left(2-\lambda
_1\right)+2 \left(\lambda _1-\lambda _2+\frac{1}{\lambda _2}-\frac{1}{\lambda
_1}\right) \log \left(\lambda _1-\lambda _2\right)\\
&+\left(\frac{\lambda
_1}{2}+\lambda
_2-\frac{2}{\lambda _2}\right) \log \left(\lambda
_1\right)+\left(\lambda _2-\frac{1}{\lambda _2}\right) \log
\left(1-\lambda _2\right)\\
&+\log ^2\left(\lambda _1\right)-2 \log
\left(-\lambda _1+\lambda _2+2\right) \log \left(\lambda _1\right)\\
&+2\log \left(\lambda _1-\lambda _2\right) \log \left(\lambda _2\right)+2
\log (2) \log \left(2-\lambda_1+\lambda _2\right)\\
&+4 \operatorname{Li}_2\left(\frac{1}{\lambda _1}\right)+2
\operatorname{Li}_2\left(\frac{\lambda _1}{2}\right)+2
\operatorname{Li}_2\left(1-\lambda _2\right)+2
\operatorname{Li}_2\left(\lambda _2\right)\\
&-2\operatorname{Li}_2\left(\frac{2 \lambda _2}{\lambda _1
\left(2-\lambda _1+\lambda _2\right)}\right)+2
\operatorname{Li}_2\left(\frac{\lambda _2-1}{\lambda _2-\lambda
_1}\right)\\
&+2 \operatorname{Li}_2\left(\frac{\lambda _2}{2-\lambda
_1+\lambda _2}\right)-2 \operatorname{Li}_2\left(\frac{2-\lambda _1+\lambda _2}{2}\right)\\
&-2\operatorname{Li}_2\left(\frac{\lambda _1-\lambda _1 \lambda
_2}{\lambda _1-\lambda _2}\right)\text{,}
\end{align*}

\begin{align*}
\frac{\pi^2}{6}g_2(\lambda_1,\lambda_2)&|_E =\frac{1}{6} \left(-3
\lambda _1-2 \pi ^2+6\right)+\left(-\lambda _1+\lambda
_2+\frac{2}{\lambda _1}\right) \log (2)-\log ^2(2)\\
&+\log ^2\left(\lambda _1\right)+\left(\frac{\lambda _1}{2}-\lambda
_2+\frac{1}{\lambda _2}-\frac{2}{\lambda _1}+2 \log (2)\right) \log \left(2
\lambda _2-\lambda _1\right)\\
&+\left(\frac{1}{\lambda _1}-\lambda
_1\right) \log \left(\lambda _1-1\right)+\left(\frac{\lambda _1}{2}-\frac{2}{\lambda
_1}\right) \log \left(2-\lambda _1\right)\\
&+\left(\frac{\lambda
_1}{2}-\frac{1}{\lambda _2}\right) \log \left(\lambda
_1\right)+\left(\lambda _2-\frac{1}{\lambda _2}\right) \log
\left(\lambda _2-1\right)\\
&+2\log (2) \left(\log \left(2-\lambda _1+\lambda _2\right)-\log
\left(-\lambda _1+\lambda _2+1\right)\right)\\
&+\log \left(\lambda
_2\right) \left(-\frac{\lambda _1}{2}+\frac{2}{\lambda _1}-2 \log
(2)\right)\\
&+\left(2 \log
\left(\lambda _2\right)+2 \log \left(-\lambda _1+\lambda _2+1\right)\right)\log\left(\lambda _1\right)\\
&-\left(2\log \left(-\lambda _1+\lambda _2+2\right)+2 \log \left(2 \lambda_2-\lambda _1\right)\right) \log\left(\lambda _1\right)\\
&+2 \operatorname{Li}_2\left(\frac{1}{\lambda _1}\right)+2
\operatorname{Li}_2\left(\frac{\lambda _1}{2}\right)-2
\operatorname{Li}_2\left(\frac{\lambda _1-2 \lambda _2}{2
\left(\lambda _1-\lambda _2-1\right)}\right)\\
&+2
\operatorname{Li}_2\left(\frac{\lambda _1-2 \lambda _2}{\lambda _1
\left(\lambda _1-\lambda _2-1\right)}\right)-2\operatorname{Li}_2\left(\frac{\lambda _1-\lambda _2-1}{\lambda _1-2
\lambda _2}\right)\\
&-2 \operatorname{Li}_2\left(-\frac{2 \lambda
_2}{\lambda _1 \left(\lambda _1-\lambda _2-2\right)}\right)+2
\operatorname{Li}_2\left(\frac{\left(\lambda _1-\lambda _2-1\right)
\lambda _2}{\lambda _1-2 \lambda _2}\right)\\
&+2
\operatorname{Li}_2\left(\frac{\lambda _2}{-\lambda _1+\lambda
_2+2}\right)-2
\operatorname{Li}_2\left(\frac{1}{2} \left(-\lambda _1+\lambda
_2+2\right)\right)\text{,}
\end{align*}

\begin{align*}
\frac{\pi^2}{6}g_2(\lambda_1,\lambda_2)&|_F = \lambda _2-\frac{\pi
^2}{2}-1+\log ^2(2)+2 \log (2) \log \left(\lambda _1-\lambda
_2-1\right)\\
&+\left(\frac{2}{\lambda _1}-2 \lambda _1\right) \log
\left(\lambda
_1-1\right)+\left(\frac{\lambda _1}{2}-\frac{2}{\lambda _1}\right)
\log \left(\lambda _1-2\right)\\
&+2 \left(\lambda _1-\lambda _2+\frac{1}{\lambda _2}-\frac{1}{\lambda
_1}\right) \log
\left(\lambda _1-\lambda _2\right)+\left(\lambda _2-\frac{1}{\lambda
_2}\right) \log \left(\lambda _1\right)\\
&+\log \left(\lambda _1-2 \lambda _2\right) \left(-\frac{\lambda
_1}{2}+\lambda _2-\frac{1}{\lambda _2}+\frac{2}{\lambda _1}-2 \log (2)\right)\\
&+2 \log \left(\lambda _1\right) \left(\log \left(\lambda _1-\lambda
_2\right)-\log \left(\lambda
_1-\lambda _2-1\right)-\log \left(\lambda _2\right)\right)\\
&+\log
\left(\lambda _1-\lambda _2\right) \log \left(\lambda _2\right)+2 \operatorname{Li}_2\left(\frac{1}{\lambda _1}\right)+2
\operatorname{Li}_2\left(1-\lambda _2\right)\\
&+2
\operatorname{Li}_2\left(\frac{\lambda _1-2 \lambda _2}{2
\left(\lambda _1-\lambda _2-1\right)}\right)-2
\operatorname{Li}_2\left(\frac{\lambda _1-2 \lambda
_2}{\lambda _1 \left(\lambda _1-\lambda _2-1\right)}\right)\\
&+2 \operatorname{Li}_2\left(\frac{\lambda _1-\lambda _2-1}{\lambda
_1-2 \lambda _2}\right)+2 \operatorname{Li}_2\left(\lambda _2\right)-2
\operatorname{Li}_2\left(\frac{\left(\lambda _1-\lambda _2-1\right)
\lambda _2}{\lambda _1-2 \lambda _2}\right)\\
&+2 \operatorname{Li}_2\left(\frac{\lambda _2-1}{\lambda _2-\lambda _1}\right)-2
\operatorname{Li}_2\left(\frac{\lambda _1-\lambda _1 \lambda
_2}{\lambda _1-\lambda _2}\right)\text{,}\\
\frac{\pi^2}{6}g_2(\lambda_1,\lambda_2)&|_{G} = 0 \text{.}
\end{align*}
\end{theorem}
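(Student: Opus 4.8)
\section{Proof Plan}

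The starting point is the formula supplied by Theorem~\ref{gktheorem} in the case $k=2$,
\[
g_2(\lambda_1,\lambda_2)=\frac{6}{\pi^2}\sum_{\gamma_1,\gamma_2\in\{A,B,C\}}\ \iint\limits_{T} f_{\gamma_1\gamma_2}(x,y)\,\bigl\|\chi_{\gamma_1}(x,y,\lambda_1)\cap\chi_{\gamma_2}(x,y,\lambda_2)\bigr\|\,\frac{dx\,dy}{xy},
\]
and the task is to evaluate these nine double integrals in closed form. I would first make every ingredient explicit. Using Lemma~\ref{lemma10} together with the recurrence for $\sigma$ from Lemma~\ref{lemma1}, each of the nine functions $f_{\gamma_1\gamma_2}$ is written as the length of the intersection of $[0,1]$ with two half-open intervals whose endpoints are affine in $x,y$, just as the case $\gamma_1\gamma_2=ABC$ is carried out in the proof of Lemma~\ref{lemma10}; on $T$ each $f_{\gamma_1\gamma_2}$ is thus a continuous, piecewise-affine function supported on an explicit subpolygon. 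Similarly each $\chi_\gamma(x,y,\lambda)$ is a half-line, so $\|\chi_{\gamma_1}(\lambda_1)\cap\chi_{\gamma_2}(\lambda_2)\|$ equals $\max\bigl(0,\ \min(1,\text{right endpoints})-\max(0,\text{left endpoints})\bigr)$, a quantity that is affine in $x,y$ when $\gamma_1,\gamma_2\in\{A,B\}$ and a ratio of affine functions otherwise, because of the $x-y$ denominator in $\chi_C$.

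The next, mostly organizational, step is to subdivide the domain. For each ordered pair $\gamma_1\gamma_2$ one forms the integrand $f_{\gamma_1\gamma_2}(x,y)\,\|\chi_{\gamma_1}(\lambda_1)\cap\chi_{\gamma_2}(\lambda_2)\|/(xy)$ and records the finitely many line segments in $T$ across which it changes analytic form --- loci of the shape $x=1/\lambda_i$, $y=1/\lambda_i$, $\lambda_1 x+\lambda_2 y=1$, $\lambda_1(x-y)=\lambda_2 x-1$, and the vanishing loci of the various interval endpoints, all of which turn out to be straight. Superimposing these loci for all nine summands partitions $T$ into finitely many convex cells on each of which the total integrand is a single rational function, and the combinatorial type of this partition is locally constant in $(\lambda_1,\lambda_2)$, changing only when $(\lambda_1,\lambda_2)$ crosses one of the lines $\lambda_i=1$, $\lambda_i=2$, $\lambda_1\pm\lambda_2=1$, $\lambda_1\pm\lambda_2=2$, $\lambda_1=\lambda_2$. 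This is exactly what produces the fourteen regions $A,\dots,G,A',\dots,G'$ of Figure~4. On each cell the inner integral of the rational integrand is elementary (the $1/(xy)$ and $1/(x-y)$ factors are split by partial fractions), and the surviving one-dimensional integral has, up to elementary terms, the form $\int \frac{\log(\text{affine})}{\text{affine}}\,du$; these are evaluated with $\operatorname{Li}_2'(z)=-\log(1-z)/z$ and the reflection, inversion and Landen functional equations of the dilogarithm. Summing the nine contributions and simplifying gives the displayed formulas on $A,\dots,G$, and the values on $A',\dots,G'$ follow immediately from the identity $g_2(\lambda_1,\lambda_2)=g_2(\lambda_2,\lambda_1)$ already established in Theorem~\ref{gktheorem}.

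The argument has little conceptual content; the difficulty is the sheer volume of bookkeeping. Each of the nine terms splits into several cells whose very shapes depend on where $(\lambda_1,\lambda_2)$ lies, and the individual cell integrals produce long strings of dilogarithms with complicated arguments --- for instance $\tfrac{\lambda_1(\lambda_2-1)}{\lambda_2-\lambda_1}$, $\tfrac{2\lambda_2}{\lambda_1(2-\lambda_1+\lambda_2)}$, $\tfrac{1}{2}(2-\lambda_1+\lambda_2)$ --- which must be collapsed, via dilogarithm functional equations, to the relatively compact expressions displayed above. The single most error-prone point is pinning down the region decomposition of the $(\lambda_1,\lambda_2)$-plane exactly: one must check that none of $A,\dots,G$ requires further subdivision and that the seven regions together with their mirror images tile the relevant part of the quadrant. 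As consistency checks I would verify that the resulting function is continuous across every boundary in Figure~4, that it vanishes identically on $G$ (which lies in the range where no two consecutive gaps can both be that large), and that it reduces on $A$ and $A'$ to the piecewise-linear formula \eqref{g2example} valid when $\lambda_1+\lambda_2<1$; a numerical comparison against the defining average \eqref{dgap2}, plotted in Figure~2, gives a further independent check.
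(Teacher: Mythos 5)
Your plan coincides with the paper's approach: the paper likewise obtains the formulas by taking the $k=2$ integral representation from Theorem \ref{gktheorem}, splitting the $(\lambda_1,\lambda_2)$-plane into the fourteen regions of Figure 4, evaluating the resulting dilogarithmic integrals on $A,\dots,G$, and passing to $A',\dots,G'$ via the symmetry $g_2(\lambda_1,\lambda_2)=g_2(\lambda_2,\lambda_1)$; indeed the paper records only this outline and omits the bookkeeping you describe. Your proposed consistency checks (continuity across the region boundaries, vanishing on $G$, and agreement with \eqref{g2example} on $\lambda_1+\lambda_2<1$) are sensible additions beyond what the paper states.
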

\bibliographystyle{model1a-num-names}

\end{document}